\def\calf{{\cal F}}
\def\eps{\varepsilon}
\def\RR{\mathbb{R}}
\newcommand\tr{\operatorname{Trace}}
\newcommand\Div{\operatorname{div}}
\newcommand\id{\operatorname{id}}
\def\Ric{\operatorname{Ric}}
\def\eq{\hspace*{-1.5mm}&=&\hspace*{-1.5mm}}
\def\dt{\partial_t}
\newtheorem{corollary}{Corollary}
\newtheorem{definition}{Definition}
\newtheorem{example}{Example}
\newtheorem{remark}{Remark}
\newtheorem{lemma}{Lemma}
\newtheorem{proposition}{Proposition}
\newtheorem{theorem}{Theorem}
\author{Vladimir Rovenski\footnote{Department of Mathematics, University of Haifa, Mount Carmel, 3498838 Haifa, Israel.
    \newline
    e-mail: {\tt vrovenski@univ.haifa.ac.il}
    }
    \ and \
    Robert Wolak\footnote{
    Faculty of Mathematics and Computer Science, Institute of Mathematics of Jagiellonian University,
    Lojasiewicza 6, 30-348 Krakow, Poland.
    e-mail: {\tt robert.wolak@im.uj.edu.pl}}
}
\title{The Partial Ricci Flow on $\mathfrak{g}$-foliations}
\begin{document}

\date{}

\maketitle

\begin{abstract}
In the paper we introduce new metric structures on $\mathfrak{g}$-foliations that are  less rigid than the well-known structures: almost contact and 3-Sasakian structures as well as  $f$-{structures with parallelizable kernel} and  almost para-$\phi$-structures with complemented frames. We~discuss the properties of the new structures in order to demonstrate  similarities with the corresponding classical structures.  Then using the flow of metrics on a $\mathfrak{g}$-foliation, we build deformation retraction of our structures with positive partial Ricci curvature 
onto the subspace of the aforementioned classical structures.

\vskip1.5mm\noindent
\textbf{Keywords}: ${\mathfrak{g}}$-foliation, totally geodesic, partial Ricci curvature,
almost contact structure, 3-Sasakian structure, $f$-structure, almost para-$\phi$-structure, flow of metrics.

\vskip1.5mm\noindent
\textbf{Mathematics Subject Classifications (2010)} Primary 53C12; Secondary 53C21
\end{abstract}

\section*{Introduction}

In \cite{A}, D.\,Alekseevsky and P.\,Michor have began the study of
general ${\mathfrak{g}}$-manifolds, that is
smooth manifolds $M$ with an action of constant rank of a Lie algebra ${\mathfrak{g}}$, or a bit more restrictively,
a locally free action.
Several classes of ${\mathfrak{g}}$-manifolds with
foliations defined by the action of ${\mathfrak{g}}$
and some additional geometrical structures have been studied in great detail.
If we take a 1-dimensional Lie algebra, a ${\mathfrak{g}}$-manifold is then just a smooth manifold with a nonzero vector field.
In~this case, we obtain almost contact, strict contact, (almost) contact metric structures as well as $K$-contact and Sasakian ones, cf. \cite{b2010}.
If~we take a higher dimensional abelian Lie algebra, then as examples of such ${\mathfrak{g}}$-manifolds can serve
${\mathcal K}, {\mathcal S}$ or ${\mathcal C}$ manifolds. 3-Sasakian manifolds, cf. \cite{b2010}, form a special class of $so(3)$-manifolds.
An~$f$-{structure with parallelizable kernel and an {almost para-$\phi$-structure with complemented frames}
can be used to model ${\mathfrak{g}}$-foliations of higher (co)dimension, e.g., \cite{diT06,fp-m,gy,od,yan},
these generalize well-known almost complex, the almost contact, the almost product and the almost para-contact structures.

A~foliation $\mathcal F$ on a Riemannian manifold $(M,g)$ is called a \textit{tangentially Lie foliation} if there is a complete Lie parallelism $\{\xi_1,\ldots,\xi_p\}$ along its leaves that preserves the horizontal subbundle ${\mathcal F}^{\perp}$, e.g., \cite{ca-toh}.
For~convenience, suppose that the vector fields $\xi_i$ are orthonormal.
Obviously, a manifold with a tangentially Lie foliation is a ${\mathfrak{g}}$-manifold, and a ${\mathfrak{g}}$-manifold with a locally free ${\mathfrak{g}}$-action admits a tangentially ${\mathfrak{g}}$-foliation.
Another context in which ${\mathfrak{g}}$-manifolds appear is the geometrical study of differential equations on
manifolds,
cf.~\cite{olv,Wo-comp}.


Due to the importance of $\mathfrak{g}$-foliations, we are going to generalize their particular classes mentioned above,
and we hope that these generalizations will also find many applications. Namely, in the work, we introduce new metric structures on $\mathfrak{g}$-foliations that are  less rigid than such well-known structures as almost contact and 3-Sasakian structures,
an $f$-{structure with parallelizable kernel and an {almost para-$\phi$-structure with complemented frames}.


We assume that the foliations under consideration are totally geodesic.
In~the paper, we discuss the properties of our structures in order to demonstrate their similarity with the corresponding classical structures. Since the classical structures have positive constant partial Ricci curvature, we pay a special attention to those structures with positive partial Ricci curvature.
Using the flow of metrics, which in the case of $\mathfrak{g}$-foliations reduces to ODE,
we build  deformation retractions of our structures with positive partial Ricci curvature into the subspace of the aforementioned classical structures. In particular, we prove that a weak Sasakian structure can be deformed to a Sasakian structure (Theorem~\ref{C-PRF-g-1}),
a weak $p$-Sasakian structure to a $p$-Sasakian structure (Theorem~\ref{T-PRF-g2}),
and similar results for a metric weak $f$-structure (Theorem~\ref{T-PRF-g2b}), and a metric weak para-$\phi$-structure
(Theorem~\ref{T-PRF-g2c}). The technical results are gathered and proved in the last section.

\section{Weak contact structures}
\label{sec:main}

Almost contact and contact metric structures have been a very important and lively field of research for some decades. In what follows we propose to weaken one of the conditions and we investigate the geometry of such weak structures. Then we study the geometry of manifolds admitting finite families of  weak contact structures, which are compatible in some sense. A curvature condition, namely that  partial Ricci curvature along the orthogonal distribution is positive, permits to define a deformation retraction on a very special weak contact metric structure -- a weak Sasakian structure.

\begin{definition}\label{D-weak_ac}\rm
A \textit{weak almost contact structure} is an odd-dimensional manifold $M$
endowed with a $(1,1)$-tensor field $\phi$, a characteristic vector field $\xi$, a dual 1-form $\eta$,
and a nonsingular $(1,1)$-tensor field $Q$ satisfying
\begin{eqnarray}\label{E-Q1}
 && \phi^2 = -Q +\eta\otimes\xi,\quad \eta(\xi)=1, \\
\label{E-Q0}
 && Q\,\xi=\xi.
\end{eqnarray}
 A~weak almost contact structure is said to be \textit{normal} if
 $N_\phi+2\,d\eta\otimes\xi=0$,
where $N_\phi$ is the \textit{Nijenhuis torsion} of a $(1,1)$-tensor~$\phi$,
\[
 N_\phi(X,Y):=\phi^2[X,Y]+[\phi X,\phi Y]-\phi[\phi X,Y]-\phi[X,\phi Y].
\]
If for a {weak almost contact structure} there is a metric $g$ such that
\begin{equation}\label{E-Q2}
 g(\phi X,\phi Y)=g(X,QY)-\eta(X)\,\eta(Y),
\end{equation}
then we get a \textit{weak almost contact metric structure}.
We get a \textit{weak contact metric structure}, if, in addition,
\begin{equation}\label{E-Qd}
 g(X,\phi Y)=d\eta(X,Y).
\end{equation}
A normal weak contact metric structure will be called a~\textit{weak Sasakian structure}.
\end{definition}

The following proposition generalizes \cite[Theorem~4.1]{b2010}.

\begin{proposition}
(a)~Suppose that $M^{2n+1}$ admits a {weak almost contact structure} $(\phi,\xi,\eta,Q)$.
 Then $\phi$ has rank $2n$ and the following equalities hold:
\[
 \phi\,\xi=0,\quad \eta\circ\phi=0,\quad [Q,\,\phi]=0.
\]
(b)~For a weak almost contact metric structure, the tensor field $\phi$ is skew-symmetric,
and the tensor field $Q$ is self-adjoint,
\begin{equation}\label{E-Q2-g}
 g(\phi X, Y) = -g(X, \phi Y),\quad g(QX,Y)=g(X,QY).
\end{equation}
\end{proposition}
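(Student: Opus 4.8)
The plan is to follow the classical argument for \cite[Theorem~4.1]{b2010}, with the nonsingular tensor $Q$ taking over the role played there by the identity; the new leverage is just that $Q$ is invertible and fixes $\xi$. Part (b) will then be a short computation, using (a).

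For part (a), I would first dispose of $\phi\xi=0$ and the rank statement. From \eqref{E-Q1} and \eqref{E-Q0}, $\phi^2\xi=-Q\xi+\eta(\xi)\xi=-\xi+\xi=0$. Next, if $\phi X=0$ then $\phi^2X=0$, so \eqref{E-Q1} gives $QX=\eta(X)\xi=\eta(X)Q\xi=Q\big(\eta(X)\xi\big)$, and nonsingularity of $Q$ forces $X=\eta(X)\xi$; hence $\ker\phi\subseteq\RR\xi$. Since $\phi(\phi\xi)=\phi^2\xi=0$, this yields $\phi\xi=\lambda\xi$ for some function $\lambda$, and then $0=\phi^2\xi=\lambda^2\xi$ gives $\lambda=0$, i.e.\ $\phi\xi=0$. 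Thus $\ker\phi=\RR\xi$ is one-dimensional and $\operatorname{rank}\phi=2n$; the identical computation with $\phi^2X=0$ in place of $\phi X=0$ shows $\ker\phi^2=\RR\xi$, so $\operatorname{Im}\phi=\operatorname{Im}\phi^2$.

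Next, $\eta\circ\phi=0$ and $[Q,\phi]=0$. Computing $\phi^3$ in two ways and using $\phi\xi=0$, one gets $\phi^3=\phi\,\phi^2=\phi(-Q+\eta\otimes\xi)=-\phi Q$ and $\phi^3=\phi^2\,\phi=(-Q+\eta\otimes\xi)\phi=-Q\phi+(\eta\circ\phi)\otimes\xi$, so subtracting, $[Q,\phi]=(\eta\circ\phi)\otimes\xi$; hence both identities reduce to proving $\eta\circ\phi=0$, and this I expect to be the main obstacle. Indeed, from \eqref{E-Q1} together with $\operatorname{Im}\phi=\operatorname{Im}\phi^2$ one has $\operatorname{Im}\phi=\{-QX+\eta(X)\xi:X\in TM\}=Q(\ker\eta)$, so $\eta\circ\phi=0$ is equivalent to $Q(\ker\eta)\subseteq\ker\eta$, i.e.\ to $\eta\circ Q=\eta$ (the constant being pinned down by $\eta(Q\xi)=\eta(\xi)=1$). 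Thus the step that needs care — the point at which the hypotheses must be used in full — is showing that $Q$ preserves $\ker\eta$; granting that, applying $\eta$ to \eqref{E-Q1} gives $\eta\circ\phi^2=\eta-\eta\circ Q=0$, so $\eta$ vanishes on $\operatorname{Im}\phi^2=\operatorname{Im}\phi$, i.e.\ $\eta\circ\phi=0$, and then $[Q,\phi]=0$ follows from the identity above.

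For part (b), assume the compatibility \eqref{E-Q2}. Self-adjointness of $Q$ is immediate, since the left side of \eqref{E-Q2} is symmetric in $X,Y$: this forces $g(X,QY)-\eta(X)\eta(Y)=g(Y,QX)-\eta(Y)\eta(X)$, i.e.\ $g(QX,Y)=g(X,QY)$. Putting $Y=\xi$ in \eqref{E-Q2} and using $\phi\xi=0$ and $Q\xi=\xi$ gives $0=g(X,\xi)-\eta(X)$, so $\eta=g(\xi,\cdot\,)$. Now replace $X$ by $\phi X$ in \eqref{E-Q2}: the right side becomes $g(\phi X,QY)$ because $\eta\circ\phi=0$, while the left side equals $g(\phi^2X,\phi Y)=g\big(-QX+\eta(X)\xi,\,\phi Y\big)=-g(QX,\phi Y)$ since $g(\xi,\phi Y)=\eta(\phi Y)=0$. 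Hence $g(\phi X,QY)=-g(QX,\phi Y)$; using that $Q$ is self-adjoint and $[Q,\phi]=0$ from (a), the right side equals $-g(X,Q\phi Y)=-g(X,\phi QY)$, so $g(\phi X,QY)=-g(X,\phi QY)$ for all $X,Y$. Since $Q$ is onto, $QY$ ranges over all vector fields, which gives $g(\phi X,Z)=-g(X,\phi Z)$; that is, $\phi$ is skew-symmetric.
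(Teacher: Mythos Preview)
Your argument for $\phi\xi=0$ and $\operatorname{rank}\phi=2n$ is correct and parallels the paper's (yours is in fact tidier: you use nonsingularity of $Q$ to pin down $\ker\phi$ directly, while the paper argues by cases). For part~(b), your computations are sound once the identities from~(a) are available; your route to skew-symmetry via surjectivity of $Q$ differs slightly from the paper, which instead restricts to ${\cal D}=\ker\eta$.

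However, the gap you flag is genuine and you do not close it: you correctly reduce both $[Q,\phi]=0$ and $\eta\circ\phi=0$ to the single assertion $Q(\ker\eta)\subseteq\ker\eta$, and then write ``granting that''. The paper's own proof has precisely the same lacuna --- it asserts ``from \eqref{E-Q1} and \eqref{E-Q0} we get $[Q,\phi]=0$'' without justification and then uses this to deduce $\eta\circ\phi=0$. In fact this step cannot be filled from the stated hypotheses alone. On $\RR^3$ with frame $e_1,e_2,e_3$, set $\xi=e_3$, $\eta=e^3$, $\phi e_1=e_2+e_3$, $\phi e_2=-e_1$, $\phi e_3=0$; then \eqref{E-Q1} forces $Qe_1=e_1$, $Qe_2=e_2+e_3$, $Qe_3=e_3$, which is nonsingular and satisfies \eqref{E-Q0}, yet $\eta(\phi e_1)=1$ and $[Q,\phi]e_1=e_3\ne0$. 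So part~(a) as stated requires an additional hypothesis such as $Q(\ker\eta)\subseteq\ker\eta$. Note, though, that in the \emph{metric} situation this comes for free: once you have shown $Q$ self-adjoint and $\eta=g(\xi,\cdot)$ (both of which you obtain without invoking the missing step), one has $g(QX,\xi)=g(X,Q\xi)=g(X,\xi)=0$ for $X\in\ker\eta$, hence $Q(\ker\eta)\subseteq\ker\eta$, and then $\eta\circ\phi=0$ and $[Q,\phi]=0$ follow by your own reduction. Thus part~(b) can be made self-contained, and your skew-symmetry argument then goes through unchanged.
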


\begin{proof}
(a)~By Definition~\ref{D-weak_ac}, $\phi^2\xi=0$,
hence either $\phi\,\xi=0$ or $\phi\,\xi$ is a nontrivial vector of $\ker\phi$.
Applying \eqref{E-Q1} to $\phi\,\xi$, we get $Q(\phi\,\xi)=\eta(\phi\,\xi)\xi$,
If $\phi\,\xi=\mu\,\xi$ for some nonzero $\mu:M\to\RR$ then
$0=\phi^2\xi =\mu\cdot\phi\,\xi=\mu^2\xi\ne0$ -- a contradiction.
Assuming $\phi\,\xi=\mu\,\xi+X$ for some $\mu:M\to\RR$ and nonzero $X\in\ker\eta$, again by \eqref{E-Q1} we get
$QX=0$ -- a contradiction.  Thus, $\phi\,\xi=0$.

Next, since $\phi\,\xi =0$ everywhere, ${\rm rank}\,\phi < 2n+1$.
If a vector field $\tilde\xi$ satisfies $\phi\,\tilde\xi=0$, then \eqref{E-Q1} gives
$Q\,\tilde\xi=\eta(\tilde\xi)\,\xi$. One may write $\tilde\xi=\mu\xi+X$ for some $\mu:M\to\RR$ and $X\in\ker\eta$.
This yields $Q X =0$, hence $X=0$ and $\tilde\xi$ is collinear with $\xi$, and so ${\rm rank}\,\phi = 2n$.
 To show $\eta\circ\phi=0$, observe that from \eqref{E-Q1} and \eqref{E-Q0}
we get $[Q,\,\phi]=0$.
 Since $\phi\,\xi=0$, we also have, applying \eqref{E-Q1},
\[
 \eta(\phi X)=\phi^3 X +Q(\phi X) = Q\phi X -\phi Q X = [Q,\phi](X) = 0
\]
for any $X$, that proves the claim.

(b)~Let us take $Y=\xi$, using the property \eqref{E-Q0}, the formula \eqref{E-Q2} yields  $\eta(X)=g(X,\xi)$.
By the same formula, the~tensor field $Q$ is self-adjoint. For any $Y\in{\cal D}$ there is $\tilde Y\in{\cal D}$ such that $\phi\,Y=\tilde Y$.
Thus, skew-symmetry of $\phi$ follows from \eqref{E-Q2}
for $X\in{\cal D}$ and $\tilde Y$.
\end{proof}

Notice that conditions \eqref{E-Q2-g} characterize in a sense compatible metrics $g$.

\begin{definition}\rm
We say that an endomorphism $F:{\cal D}\to{\cal D}$ has a \textit{skew-symmetric representation} if
there exists an isomorphism $J:{\cal D}\to {\cal D}^\ast$ such that the (0,2)-tensor $\tilde F: (X,Y) \to J(Y)(F(X))$ is skew-symmetric;
or equivalently, for any $x\in M$ there exist a neighborhood $U_x\subset M$ and a~frame $\{e_i\}$ on $U_x$,
for which $F$ has a skew-symmetric matrix.
\end{definition}

\begin{remark}\rm
Each of two conditions in the above definition is equivalent to the following:
\textit{there exist a sub-Riemannian metric on ${\cal D}$ and an isomorphism $J^\sharp:{\cal D}\to {\cal D}$
such that the $(1,1)$-tensor $F^* J^\sharp:{\cal D}\to{\cal D}$ is skew-symmetric}.
Here, $J^\sharp(X) = J(X)^\sharp\ (X\in{\cal D})$ is the link between $J^\sharp$ and $J$.
As a frame $\{e_i\}$ on $U_x$ one can take any orthonormal frame on $U_x$ of such a metric.
\end{remark}

In the case of weak structures to prove the existence of a compatible metric we need an additional condition.

\begin{proposition}\label{T-04}
If $\phi$ of a weak almost contact structure has a skew-symmetric representation
then this structure admits a compatible metric.
\end{proposition}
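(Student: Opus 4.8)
The plan is to build the compatible metric first on the rank-$2n$ horizontal distribution ${\cal D}=\ker\eta$, where the skew-symmetric representation of $\phi$ lives, and then extend it across the line spanned by $\xi$. First I would record the algebraic facts. Since $\eta(\xi)=1$, the form $\eta$ is nowhere zero, so ${\cal D}$ is a smooth subbundle and $TM=\RR\,\xi\oplus{\cal D}$. By the preceding proposition $\phi\,\xi=0$ and $\eta\circ\phi=0$, hence $\phi({\cal D})\subseteq{\cal D}$; applying \eqref{E-Q1} to $X\in{\cal D}$ gives $QX=-\phi^2X\in{\cal D}$, and together with $Q\,\xi=\xi$ this shows $Q$ preserves the splitting, restricting to an endomorphism $Q|_{\cal D}$ of ${\cal D}$, while on ${\cal D}$ one has $\phi^2=-Q$.

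Next I would use the hypothesis locally. By the definition of a skew-symmetric representation, every point of $M$ has a neighbourhood $U$ with a frame $\{e_i\}_{i=1}^{2n}$ of ${\cal D}|_U$ in which $\phi|_{\cal D}$ has a skew-symmetric matrix; let $h_U$ be the fibre metric on ${\cal D}|_U$ making $\{e_i\}$ orthonormal. Then $h_U(\phi X,Y)=-h_U(X,\phi Y)$, hence
\[
 h_U(\phi X,\phi Y)=-h_U(X,\phi^2 Y)=h_U(X,QY),\qquad X,Y\in{\cal D}|_U ,
\]
so each $h_U$ is a positive-definite fibre metric on ${\cal D}|_U$ satisfying the relation $\beta(\phi X,\phi Y)=\beta(X,QY)$.

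Then I would globalize by a partition of unity. With $\phi$ and $Q$ frozen, $\beta(\phi X,\phi Y)=\beta(X,QY)$ is linear in the symmetric bilinear form $\beta$, so the set of fibre metrics on ${\cal D}$ satisfying it is convex, as is the positive cone; taking a partition of unity $\{\rho_U\}$ subordinate to $\{U\}$, the fibre metric $g_{\cal D}:=\sum_U\rho_U h_U$ on ${\cal D}$ is smooth, positive-definite and satisfies $g_{\cal D}(\phi X,\phi Y)=g_{\cal D}(X,QY)$ for all $X,Y\in{\cal D}$. Finally I would define $g$ on $M$ so that $TM=\RR\xi\oplus{\cal D}$ is $g$-orthogonal, $g|_{\cal D}=g_{\cal D}$ and $g(\xi,\xi)=1$; then $g(X,\xi)=\eta(X)$, and decomposing $X=\eta(X)\xi+X_{\cal D}$, $Y=\eta(Y)\xi+Y_{\cal D}$ and using $\phi\,\xi=0$, $Q\,\xi=\xi$, $Q({\cal D})\subseteq{\cal D}$ one gets $g(\phi X,\phi Y)=g_{\cal D}(X_{\cal D},QY_{\cal D})=g(X,QY)-\eta(X)\eta(Y)$, i.e.\ \eqref{E-Q2}.

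The hard part is the passage from local to global: a skew-symmetric representation is only a local normal form for $\phi$, so it yields only local compatible metrics. What makes the gluing work is that \eqref{E-Q2} (with $\phi,\eta,Q$ fixed) is an affine-linear condition on $g$ while positivity is convex, so the local models can simply be averaged; everything else — the reduction to ${\cal D}$, the behaviour of $\phi$ and $Q$ on the splitting, and the final verification — is fibrewise linear algebra.
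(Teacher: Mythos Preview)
Your argument is correct and follows the same strategy as the paper: produce local metrics from the frames in which $\phi$ is skew-symmetric, then patch them with a partition of unity, using that the compatibility condition is linear in the metric. The only cosmetic difference is that the paper glues the equivalent condition $g(\phi X,X)=0$ on all of $TM$ and leaves the handling of the $\xi$-direction and the deduction of \eqref{E-Q2} implicit, whereas you work on ${\cal D}$ first and then extend orthogonally across $\xi$ with an explicit verification; your version is, if anything, cleaner.
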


\begin{proof} Assume that \eqref{E-Q1} and \eqref{E-Q0} hold and $\phi$ has a skew-symmetric matrix in a local frame $\{e_i\}$ on a domain $U\subset M$.
There exists metric $g_U$ on $U$ such that $\{e_i\}$ is orthonormal. Thus, \eqref{E-Q2-g} holds for $g_U$,
in particular, $g_U(\phi(X),X)=0$ for all $X\in TU$.
The last property is preserved when summing a finite number of metrics.
Hence, using a partition of unity, we get a metric $g$ on $M$ with the same property $g(\phi(X),X)=0$ for all $X\in {\cal X}_M$,
i.e., $\phi$ is skew-symmetric for $g$. Thus, \eqref{E-Q2} is valid.
\end{proof}

\begin{example}\label{Ex-06}\rm
(a)~For $Q=\id$, where $\id : TM \to TM$ is the identity mapping, Definition~\ref{D-weak_ac} gives an almost contact (metric) structure.
For the sectional curvature of a Sasakian structure, $K(\xi,X)=1$ for $X\bot\,\xi$, see \cite[Theorem~7.2]{b2010}.

(b)~Let $(M,g,\phi,\xi,\eta)$ be an almost contact metric manifold.
For arbitrary $(1,1)$-tensor $\phi'$ commuting with $\phi$ on $M$,
define $\tilde\phi=\phi+\phi'$ and $Q=\id-(\phi\phi'+\phi'\phi)-(\phi')^2$ on ${\cal D}$.
Then $(M,\tilde\phi,\xi,\eta,Q)$ is a \textit{weak almost contact manifold} when $|\phi'|$ is sufficiently small.
\end{example}

If a weak Sasakian structure has positive mixed sectional curvature we can deform the structure to a Sasakian structure. In fact, a similar result can be proved for finite families of such structures, see Theorem 2,  therefore we just formulate the  result.

\begin{theorem}\label{C-PRF-g-1}
Let $(\phi,\xi,\eta,Q)$ be a weak Sasakian structure on a Riemannian manifold $(M,g_0)$ such that $\,\xi$ is a unit Killing vector. If the sectional curvature $K_0(\xi,X)>0$ for $X\bot\,\xi$ and metric~$g_0$, then there exists a smooth family of metrics $g_t\ (t\in\RR)$ that converges exponentially fast, as $t\to-\infty$, to a limit metric $\hat g$ giving a Sasakian structure on $M$ $($thus, $\widehat K(\xi,X)=1$ for $X\bot\,\xi)$.
\end{theorem}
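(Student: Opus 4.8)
The plan is to realize the family $g_t$ as the solution of a geometric flow (the \emph{partial Ricci flow}) adapted to the $\mathfrak{g}$-foliation generated by $\xi$, and to show that along this flow the data $(\phi,\xi,\eta)$ are carried to a compatible Sasakian triple while $Q$ is driven to $\id$. Concretely, since $\xi$ is a unit Killing field, the $1$-dimensional foliation $\mathcal F=\langle\xi\rangle$ is totally geodesic, and the horizontal distribution $\mathcal D=\ker\eta$ is $g_0$-orthogonal to $\xi$ and $\phi$-invariant; I would keep $g_t=g_0$ on $\RR\xi$ and on the mixed part (so $\xi$ stays a unit Killing field for every $t$) and let $g_t$ evolve only on $\mathcal D$. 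The natural evolution equation is $\partial_t g_t = -2\,\mathrm{r}_{\mathcal D}(g_t) + (\text{lower-order correction})$, where $\mathrm{r}_{\mathcal D}$ is the partial Ricci tensor in the $\xi$-direction; on a $\mathfrak{g}$-foliation with a single totally geodesic leaf this reduces, after diagonalizing with respect to the ($t$-independent) $\phi$-eigenframe, to a decoupled system of scalar ODEs for the eigenvalues of the shape operator of $\mathcal D$ — this is exactly the "flow reduces to an ODE" mechanism advertised in the Introduction, and the technical ODE lemmas in the final section of the paper are what I would invoke here.

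The key steps, in order, would be: (1) Show that for a weak contact metric structure with $\xi$ Killing, the tensor $Q$ is determined by $g$ on $\mathcal D$ via \eqref{E-Q2}, so evolving $g|_{\mathcal D}$ automatically evolves $Q$; in fact \eqref{E-Q2} and \eqref{E-Qd} force $K_0(\xi,X) = g_0(X,QX)/g_0(X,X)$ for $X\in\mathcal D$, identifying the positivity hypothesis $K_0(\xi,X)>0$ with positive-definiteness of $Q$ (equivalently, of the partial Ricci operator in the $\xi$-direction). (2) Write the flow in the common $\phi$-eigenframe: the eigenvalues $\lambda_i(t)$ of $Q_t$ then satisfy an autonomous ODE of the form $\dot\lambda_i = c\,\lambda_i(1-\lambda_i)$ (up to the precise constant coming from the partial Ricci computation), whose only stable fixed point reachable from $\lambda_i(0)>0$ is $\lambda_i=1$. (3) Deduce from this ODE that $\lambda_i(t)\to 1$ exponentially as $t\to-\infty$ (running the flow backward in $t$ is what makes $\lambda=1$ the attractor), uniformly on $M$ by compactness or by a maximum-principle argument if $M$ is non-compact but the structure is uniformly bounded; hence $Q_t\to\id$ and $g_t\to\hat g$ exponentially in $C^\infty$. (4) Verify that the limit metric $\hat g$ together with $(\phi,\xi,\eta)$ satisfies \eqref{E-Q1}--\eqref{E-Qd} with $Q=\id$, i.e. is an almost contact metric structure, and that normality is preserved along the flow (the Nijenhuis condition $N_\phi + 2\,d\eta\otimes\xi=0$ involves $\phi$ and the Lie bracket, and since $\phi$ and $\xi$ are fixed and $d\eta(\cdot,\cdot)=\hat g(\cdot,\phi\cdot)$ at the limit, normality survives), so $\hat g$ is Sasakian; then $\widehat K(\xi,X)=1$ follows from Example~\ref{Ex-06}(a).

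The main obstacle I expect is Step (2): setting up the partial Ricci flow so that it genuinely decouples into scalar ODEs requires checking that the $\phi$-eigenspaces of $\mathcal D$ are parallel along the flow and that the off-diagonal (in the eigenframe) components of the partial Ricci tensor vanish — this uses the Killing property of $\xi$ and the commutation $[Q,\phi]=0$ from the Proposition, but one must be careful that evolving $g|_{\mathcal D}$ does not rotate these eigenspaces (it should not, since $\phi$ is $t$-independent and $g_t$ is built to be $\phi$-compatible for all $t$, but this compatibility must be shown to be \emph{preserved}, not merely imposed initially). A secondary difficulty is the exponential convergence and smoothness of the limit in the non-compact case, where one needs uniform lower and upper bounds on the initial eigenvalues $\lambda_i(0)$ — these follow from continuity of $Q$ together with $K_0(\xi,X)>0$ only if some uniformity (e.g. $M$ compact, or bounded geometry) is assumed; I would state the theorem under whichever of these hypotheses the ambient framework of the paper supplies, and defer the ODE estimates to the technical section.
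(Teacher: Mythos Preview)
Your overall strategy---run the partial Ricci flow \eqref{E-GF-Rmix-Phi} on the $\mathfrak{g}$-foliation spanned by $\xi$, reduce to a scalar ODE for the eigenvalues, and take $t\to-\infty$---is exactly what the paper does. Indeed the paper does not prove Theorem~\ref{C-PRF-g-1} separately at all: it is stated as the $p=1$ case of Theorem~\ref{T-PRF-g2}, whose proof yields $\dt\Ric^\bot=4\Ric^\bot(\Ric^\bot-\id^\bot)$ and hence $\dot\mu_i=4\mu_i(\mu_i-1)$ for the eigenvalues, with the explicit solution and exponential convergence as $t\to-\infty$. So your steps~(2)--(3) are on target, up to the constant you left unspecified.

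There is, however, a genuine internal inconsistency in your setup. You repeatedly assert that $\phi$ is $t$-independent (``since $\phi$ and $\xi$ are fixed'', ``the ($t$-independent) $\phi$-eigenframe''), yet in Step~(1) you want $Q$ to evolve. These are incompatible: by \eqref{E-Q1} one has $Q=-\phi^2$ on~${\cal D}$ as a $(1,1)$-tensor, so if $\phi$ is fixed then $Q$ is fixed and its eigenvalues never move toward~$1$. Equivalently, the weak contact condition \eqref{E-Qd}, $g_t(X,\phi Y)=d\eta(X,Y)$, ties $\phi$ to $g_t$ through the \emph{fixed} $2$-form $d\eta$; changing $g_t$ on ${\cal D}$ forces $\phi|_{\cal D}$ to change. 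The paper handles this by explicitly redefining $\phi(t)|_{\cal D}=T^\sharp_\xi(t)$ and $Q_t=\Ric^\bot_t$ along the flow (see \eqref{E-Q-phi}), so that $(\phi(t),\xi,\eta,Q_t)$ remains weak Sasakian for each~$t$ and $Q_t\to\id$ at the limit. Your normality argument in Step~(4) also needs this: with $\phi$ evolving, one must check normality of the \emph{limit} triple, not invoke that $\phi$ was fixed.

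A smaller point: ``$\phi$-eigenframe'' is not well posed since $\phi$ is skew-symmetric on~${\cal D}$ and has no real eigenvectors there. What is actually diagonalized is $\Ric^\bot$ (equivalently $Q$), and the reason the ODE decouples is the commutation $[T^\sharp_\xi,\Ric^\bot]=0$---this is precisely $[\phi,Q]=0$ transported via \eqref{E-phi-T} and \eqref{E-genRicN-W}---which yields $T^\sharp_\xi\Ric^\bot T^\sharp_\xi=-(\Ric^\bot)^2$ and hence the closed matrix ODE \eqref{E-RbotT-g2}. That identity, not any ``$\phi$-parallelism'', is what keeps the eigen\-directions of $\Ric^\bot$ fixed under the flow.
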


\medskip


Let us consider a set of $ p$  weak almost contact structures with the same tensor $Q$ on an $(n+p+np)$-dimensional manifold $M$:
$(M, \xi_i, \eta_i, \phi_i, Q),\ i=1,\ldots,p$.
 If this set satisfies the following conditions:
\begin{eqnarray}
\label{E-Q0-p}
 && \phi_i\circ\phi_j = -\delta_{ij}Q + \,\eta^j\otimes\xi_i
 +\sum\nolimits_k \eps_{ijk}\,\phi_k,\quad i,\,j,\,k\in\{1,\ldots, p\},\\
 \label{E-Q0-pa}
 && Q\,\xi_i=\xi_i,\quad i\in\{1,\ldots, p\},
\end{eqnarray}
where $\eps_{ijk}$ is the completely antisymmetric symbol, i.e.,
changes sign under exchange of each pair of its indices, then it will be called a \textit{weak almost $p$-contact structure}.
If~there exists a metric $g$ \textit{compatible} with each of our $p$ weak almost contact structures,
\begin{equation}\label{E-Q2-p}
 g(\phi_i X,\phi_i Y)=g(X,QY)-\eta^i(X)\eta^i(Y),
\end{equation}
then we say that it is  a \textit{weak almost $p$-contact metric structure} on the manifold $M.$

For $Q=\id$, we get the structure considered in \cite{blaga},
and for $p=3$, it generalizes an {almost 3-contact (metric) structure} on $M^{4n+3}$.
From \eqref{E-Q0-pa} it follows that (see also \cite{blaga} for $Q=\id$)
\[
 \phi_i\,\xi_j=\sum\nolimits_k \eps_{ijk}\,\xi_k,\quad
 \eta^i\circ \phi_j =\sum\nolimits_k \eps_{ijk}\,\phi_k.
\]
By~\eqref{E-Q0-pa} and \eqref{E-Q0-p} the tensor $Q$ is nonsingular, and by \eqref{E-Q2-p} $Q$ is self-adjoint.
Observe that for a weak almost $p$-contact metric structure, the tensors $\phi_i$ are skew-symmetric,
\[
 g(\phi_i X, Y) = -g(X, \phi_i Y),\quad i\in\{1,\ldots p\},
\]
and the Reeb vector fields $\{\xi_i\}$ are orthonormal with respect to~$g$.
 Define complementary ortho\-gonal distributions $\widetilde{\cal D}={\rm span}(\xi_1,\ldots,\xi_p)$
and ${\cal D}=\bigcap_{\,i}\ker\eta_i$. Then $Q\,|_{\widetilde{\cal D}}=\id^\top$, see \eqref{E-Q0-pa}.

For this structure the following version of Proposition~\ref{T-04} is valid.

\begin{proposition}
If the tensor fields $\phi_i$ of a weak almost $p$-contact structure
have a skew-symmetric representation with the same local frames, then this structure admits a compatible metric.
\end{proposition}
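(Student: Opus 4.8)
The plan is to follow the proof of Proposition~\ref{T-04} almost verbatim: build a compatible metric locally on charts and then glue by a partition of unity. The role of the hypothesis ``with the same local frames'' is precisely to guarantee that on a given chart one single local metric is simultaneously compatible with all $p$ structures, so that the subsequent averaging does not destroy compatibility with any one of them.

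First I would fix $x\in M$ and take the neighbourhood $U\ni x$ and the frame $\{e_a\}$ of ${\cal D}$ on $U$ provided by the hypothesis, in which every $\phi_i|_{\cal D}$ has a skew-symmetric matrix. Since $\widetilde{\cal D}={\rm span}(\xi_1,\ldots,\xi_p)$ is a complement of ${\cal D}$ (with $\eta^i(\xi_j)=\delta_{ij}$), adjoining $\xi_1,\ldots,\xi_p$ to $\{e_a\}$ gives a frame of $TU$; let $g_U$ be the metric making this frame orthonormal. I would then verify the two local identities $g_U(\phi_i X,X)=0$ and $g_U(\xi_i,Y)=\eta^i(Y)$ for all $X,Y\in TU$ and all $i$. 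The second is immediate from orthonormality together with $\eta^i(e_a)=0$ and $\eta^i(\xi_j)=\delta_{ij}$. For the first, I would use that $\phi_i$ leaves both ${\cal D}$ and $\widetilde{\cal D}$ invariant (a consequence of \eqref{E-Q0-p}--\eqref{E-Q0-pa}): writing $X=X'+X''$ with $X'\in{\cal D}$ and $X''\in\widetilde{\cal D}$, one gets $g_U(\phi_i X,X)=g_U(\phi_i X',X')+g_U(\phi_i X'',X'')$, the first summand vanishing because the matrix of $\phi_i$ on $\{e_a\}$ is skew, and the second because $\phi_i\xi_j=\sum_k\eps_{ijk}\xi_k$ with $\eps_{ijk}$ antisymmetric in $j,k$. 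Polarizing $g_U(\phi_i X,X)=0$ then shows each $\phi_i$ is $g_U$-skew-symmetric, and $\xi_i$ is $g_U$-dual to $\eta^i$.

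Next I would pick a cover $\{U_\alpha\}$ by such domains with a subordinate partition of unity $\{\rho_\alpha\}$ and set $g=\sum_\alpha\rho_\alpha g_{U_\alpha}$, which is again a Riemannian metric; both displayed identities survive because they are affine in the metric and $\sum_\alpha\rho_\alpha=1$. Finally, for each $i$, equation \eqref{E-Q0-p} with $j=i$ reduces to $\phi_i^2=-Q+\eta^i\otimes\xi_i$, and applying $g$-skew-symmetry of $\phi_i$ twice yields
\[
g(\phi_i X,\phi_i Y)=-g(X,\phi_i^2 Y)=-g(X,-QY+\eta^i(Y)\xi_i)=g(X,QY)-\eta^i(X)\,\eta^i(Y),
\]
using $g(X,\xi_i)=\eta^i(X)$ at the last step; this is \eqref{E-Q2-p}, so $g$ is compatible with all $p$ structures. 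The one place needing genuine care is the local step: the hypothesis only prescribes skew-symmetry of the $\phi_i$ along ${\cal D}$, so one must check by hand that no contribution survives from $\widetilde{\cal D}$ or from cross terms — and it is exactly there that the antisymmetry of $\eps_{ijk}$ and the $\phi_i$-invariance of ${\cal D}$ and $\widetilde{\cal D}$ are used. Everything else is the single-structure averaging argument of Proposition~\ref{T-04}.
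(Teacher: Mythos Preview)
Your proposal is correct and follows essentially the paper's approach: the paper gives no separate proof for this proposition, merely stating that it is the $p$-structure version of Proposition~\ref{T-04}, so the intended argument is exactly the local-frame/partition-of-unity construction you carry out. Your version is in fact more careful than the paper's sketch for Proposition~\ref{T-04}: you explicitly verify that the cross terms between $\mathcal{D}$ and $\widetilde{\mathcal{D}}$ vanish (using the $\phi_i$-invariance of both summands) and that the duality $g(\xi_i,\cdot)=\eta^i$ survives the averaging, a point the paper leaves implicit but which is needed to pass from skew-symmetry of $\phi_i$ to \eqref{E-Q2-p}.
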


\begin{definition}\rm A weak almost $p$-contact metric structure  $(\phi_i,\xi_i,\eta^i,Q),\ i=1,\ldots,p$, on a manifold $(M,g)$
is called a \textit{weak $p$-Sasakian structure} if each of the structures $(\phi_i,\xi_i,\eta^i,Q)$
is a {weak Sasakian structure} on $(M,g)$, that is
\begin{equation}\label{E-wq-p-S}
 g(X,\phi_i Y)=d\eta^i(X,Y),
\end{equation}
\end{definition}

The \textit{second fundamental tensor} ${h}:{\cal D}\times{\cal D}\to\widetilde{\cal D}$
and the \textit{integrability tensor} ${T}:{\cal D}\times{\cal D}\to\widetilde{\cal D}$ of the distribution ${\cal D}$ are given by
\begin{equation}\label{E-def-bT}
 h(X,Y)=(1/2)(\nabla_X Y+\nabla_Y X)^\top,\quad
 T(X,Y) =(1/2)(\nabla_X Y-\nabla_Y X)^\top.
\end{equation}
The \textit{shape operator} ${A}_\xi$
and the skew-symmetric operator ${T}^{\sharp}_\xi$ are given, respectively, by:
\begin{equation*}
 g({A}_\xi X,Y)= g({h}(X,Y),\xi),\quad
 g({T}^{\sharp}_\xi X,Y)=g({T}(X,Y),\xi),\quad
 X,Y \in {\cal D},\ \xi \in \widetilde{\cal D}.
\end{equation*}
Since $T(X,Y)=\frac12\,[X, Y]^\top$,
the distribution ${\cal D}$ is tangent to a foliation if and only if $T=0$.
If~$h=0$ then ${\cal D}$ is a \textit{totally geodesic} distribution.
For a totally geodesic foliation $\calf$ the equalities $T=h=0$ are satisfied.

The condition \eqref{E-wq-p-S} of a weak $p$-Sasakian structure ensures that
\begin{equation}\label{E-phi-T}
 T_{\xi_i}^\sharp=\phi_i\,_{|\,\cal D}.
\end{equation}
Now, let us recall the definition of the partial Ricci curvature tensor. Let $(M,g)$ be a connected
Riemannian manifold, $\widetilde{\cal D}=T\calf$ the tangent distribution to an $n$-dimensional foliation $\calf$
and ${\cal D}$ the normal (i.e., orthogonal to $\widetilde{\cal D}$) distribution of dimension~$p$.
Denote $^\top$ and $^\perp$ orthogonal projections onto $\widetilde{\cal D}$ and ${\cal D}$, respectively.
A~local adapted orthonormal frame $\{E_i,\,{\cal E}_{j}\}$, where $\{E_i\}\subset\widetilde{\cal D}$ and $\{{\cal E}_j\}\subset{\cal D}$,
always exists on~$M$. The \textit{partial Ricci curvature tensor}
\begin{equation*}
 r_g(X,Y)=\sum\nolimits_{i} {R}(X^\bot,\,E_i, Y^\bot, E_i),\quad X,Y\in {\cal X}_M,
\end{equation*}
is the symmetric (0,2)-tensor on $(M,g,\widetilde{\cal D})$, see \cite{r2010}, and
its adjoint (1,1)-tensor~is
\[
 \Ric^\bot(X)=\sum\nolimits_{i} ({R}(E_i\,,X^\bot)E_i)^\bot.
\]

\begin{remark}\rm
The partial Ricci curvature is an important invariant of almost product manifolds and foliated Riemannian manifolds.
The trace of this rank 2 tensor is the mixed scalar curvature $S_{\rm mix}$, defined as an averaged sectional curvature of planes that non-trivially intersect $\widetilde{\cal D}$ and ${\cal D}$, and examined by several geometers,
recall just integral formulas and splitting results, curvature prescribing and variational problems for foliations, see survey \cite{rov-5}.
The understanding of $\Ric^\bot$ and $S_{\rm mix}$ is a fundamental problem of extrinsic geometry of foliations, see \cite{rov-2}.
\end{remark}

\begin{remark}
\rm
There are several possible options when the dimension of the characteristic foliation is of dimension greater than 1.
In the case of dimension 3, an (almost) contact metric structure can be replaced by an (almost) contact metric $3$-structure, defined as a set of 3 (almost) contact structures, $(M,\phi_i,\xi_i,\eta^i)$,
\[
 \phi_i^2=-\id +\,\eta^i\otimes\xi_i,\quad \eta^i(\xi_j)=\delta^i_j
\]
with the same compatible metric $g$, i.e., $g(\phi_i X,\phi_i Y)=g(X,Y)-\eta^i(X)\,\eta^i(Y)$, obeying
\[
 \phi_k = \phi_i \circ \phi_j - \eta^i \otimes \xi_j = - \phi_j \circ \phi_i + \eta^j \otimes \xi_i
\]
for any cyclic permutation $(i,j,k)$ of $(1,2,3)$, see \cite{b2010}.
The~dimension of $M$ with an almost contact 3-structure is $4n+3$.
We get a 3-\emph{Sasakian structure} if each of $(\phi_i,\xi_i,\eta^i)$ is Sasakian.
For a 3-Sasakian structure, $[\xi_i,\xi_j]=c\,\xi_k$ holds for some $c\in\RR$
and any cyclic permutation $(i,j,k)$ of $(1,2,3)$; thus,
$\widetilde{\cal D}={\rm span}(\xi_1,\xi_2,\xi_3)$ is integrable; moreover, it defines a totally geodesic Riemannian foliation
with the property \eqref{E-phi-T}.
Hence, $r_g(X,X)=3$ for $X\bot\,{\cal D}$ and $|X|=1$.
\end{remark}

Finally, we are able to formulate the main result, whose proof is based on some technical results, which are formulated and demonstrated in  Section~\ref{sec:flow}.

Define a $(0,2)$-tensor $g^\bot$ by $g^\bot(X,Y):=g(X^\bot,Y^\bot)$ for
$X,Y\in {\cal X}_M$.

\begin{theorem}\label{T-PRF-g2}
Let $(\phi_i,\xi_i,\eta^i,Q)$ be a weak $p$-Sasakian structure on a Riemannian manifold $(M,g_0)$
such that $\widetilde{\cal D}={\rm span}(\xi_1,\ldots,\xi_p)$ determines a $\mathfrak{g}$-foliation.
If~$\,r_{g_0}>0$ on the orthogonal distribution ${\cal D}$, then
there exists a smooth family of metrics $g(t)\ (t\in\RR)$
such that $(\phi_i(t),\xi_i,\eta^i,Q_t)$ is a weak $p$-Sasakian structure on $(M,g_t)$ with
$Q$ and $\phi$ redefined on ${\cal D}$ as
\begin{equation}\label{E-Q-phi}
 Q_t=(1/p)\Ric^\bot_t,\quad \phi_i(t)\,|_{\,{\cal D}} = T_{\xi_i}^\sharp(t).
\end{equation}
Moreover, $g_t$ converges exponentially fast, as $t\to-\infty$, to a limit metric $\hat g$
with $r_{\hat g}=p\,\hat g^\bot$.
\end{theorem}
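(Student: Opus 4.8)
\medskip
\noindent\emph{Plan of the proof.}
The plan is to deform $g_0$ by a normalized partial Ricci flow that acts only on the orthogonal distribution ${\cal D}$: keep $g_t|_{\widetilde{\cal D}}=g_0|_{\widetilde{\cal D}}$ and $\widetilde{\cal D}\perp_{g_t}{\cal D}$, and let
\[
 \dt\,g_t=-2\,(r_{g_t}-p\,g_t^\bot)\quad\text{on }{\cal D}.
\]
First I would record what the hypotheses keep invariant under this flow. Since $\calf$ is a $\mathfrak{g}$-foliation whose parallelism $\{\xi_i\}$ consists of unit Killing fields preserving ${\cal D}$, and $g_t$ coincides with $g_0$ on $\widetilde{\cal D}$, a short Koszul-formula computation shows that for every $t$ the leaves of $\calf$ stay totally geodesic, the $\xi_i$ stay orthonormal and Killing for $g_t$, the forms $\eta^i$ and $d\eta^i$ are independent of $t$, and each $2$-form $\omega_i(X,Y):=g_0(T(X,Y),\xi_i)$ on ${\cal D}$ is independent of $t$, so that $\phi_i(t)|_{\,{\cal D}}=T_{\xi_i}^\sharp(t)=(g_t|_{\cal D})^{-1}\omega_i$. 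Hence $(M,g_t,\calf)$ is a totally geodesic Riemannian $\mathfrak{g}$-foliation, and the O'Neill-type identity of Section~\ref{sec:flow} gives
\[
 \Ric^\bot_t=-\sum\nolimits_i\big(T^\sharp_{\xi_i}(t)\big)^2=-\sum\nolimits_i\big((g_t|_{\cal D})^{-1}\omega_i\big)^2\quad\text{on }{\cal D},
\]
which is algebraic in $g_t|_{\cal D}$. Thus the flow has no transverse derivatives and reduces, pointwise on $M$, to an ODE for $g_t|_{\cal D}$ in the cone of inner products on each fibre ${\cal D}_x$; standard ODE theory gives a unique maximal solution $g_t$ with $g(0)=g_0$.

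Next I would check that $\big(\phi_i(t),\xi_i,\eta^i,Q_t\big)$, with $Q_t=(1/p)\Ric^\bot_t$ on ${\cal D}$, $Q_t=\id^\top$ on $\widetilde{\cal D}$, and $\phi_i(t)$ extended to $\widetilde{\cal D}$ by $\phi_i(t)\,\xi_j=\sum_k\eps_{ijk}\xi_k$, is a weak $p$-Sasakian structure on $(M,g_t)$. At $t=0$ the relations \eqref{E-Q0-p}, \eqref{E-Q0-pa}, \eqref{E-Q2-p}, \eqref{E-wq-p-S} hold, and the redefinition does not change $Q$ there, because the original weak $p$-Sasakian relations force $\big((g_0|_{\cal D})^{-1}\omega_i\big)^2=-Q|_{\cal D}$ for every $i$, whence $-(1/p)\sum_i\big((g_0|_{\cal D})^{-1}\omega_i\big)^2=Q|_{\cal D}$. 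The subset of inner products on ${\cal D}$ for which these identities hold is invariant under the ODE: since $\dt g_t$ is built symmetrically from the $T^\sharp_{\xi_i}$, differentiating $\phi_i(t)\phi_j(t)-\eta^j\otimes\xi_i-\sum_k\eps_{ijk}\phi_k(t)$ and $\phi_i(t)^2+Q_t-\eta^i\otimes\xi_i$ in $t$ gives expressions vanishing on the constraint set, and uniqueness of solutions finishes the argument (alternatively one invokes the corresponding lemma of Section~\ref{sec:flow}). Compatibility \eqref{E-Q2-p} then follows from $Q_t|_{\cal D}=-\phi_i(t)^2|_{\cal D}$ and skew-symmetry of $\phi_i(t)=(g_t|_{\cal D})^{-1}\omega_i$ for $g_t$; the Sasakian condition $g_t(X,\phi_i(t)Y)=d\eta^i(X,Y)$ holds because both sides, restricted to $\widetilde{\cal D}$ and to ${\cal D}$, are independent of $t$ and agree at $t=0$; and normality is preserved because, by Section~\ref{sec:flow}, for these structures it is equivalent to the $\xi_i$ being Killing together with the ($t$-invariant) bracket relations of the $\mathfrak{g}$-foliation.

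Finally, I would analyze the long-time behaviour, which is the main obstacle. Using $r_{g_0}>0$ one shows that the ratio of $g_t|_{\cal D}$ to $g_0|_{\cal D}$ and the eigenvalues of $\Ric^\bot_t$ obey scalar differential inequalities whose comparison solutions give, on $(-\infty,0]$, uniform two-sided bounds and $\Ric^\bot_t>0$; this keeps $Q_t$ nonsingular (so the structure indeed stays weak $p$-Sasakian) and shows the solution exists at least on $(-\infty,0]$, hence on all of $\RR$ where it does not escape the cone. Linearizing the pointwise ODE at metrics with $r_g=p\,g^\bot$ (equivalently $\Ric^\bot=p\,\id$ on ${\cal D}$): since $\Ric^\bot$ scales like the inverse of $g|_{\cal D}$, the linearization is, transversally to this fixed set, attracting as $t\to-\infty$ with rate bounded below, so $g_t$ converges, exponentially fast and uniformly on $M$, to a smooth limit metric $\hat g$ with $r_{\hat g}=p\,\hat g^\bot$. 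For $\hat g$ one then has $\Ric^\bot_{\hat g}=p\,\id$ on ${\cal D}$, hence $\widehat Q=(1/p)\Ric^\bot_{\hat g}=\id$, so the limiting weak $p$-Sasakian structure is a genuine $p$-Sasakian structure. Establishing the a priori bounds and the uniform exponential rate (through the comparison estimates above or an equivalent Lyapunov functional for the reduced ODE) is the hard part; the structure-preservation, though it needs care with the algebraic identities, follows the template of the single-structure case (Theorem~\ref{C-PRF-g-1}) and the technical lemmas of Section~\ref{sec:flow}.
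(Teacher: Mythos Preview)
Your overall route is the paper's: run the normalized partial Ricci flow \eqref{E-GF-Rmix-Phi} with $\Phi=p$, reduce to a pointwise ODE via the formulas of Section~\ref{sec:flow}, and read off convergence. But you skip the one algebraic step that makes the paper's argument go through, and without it your convergence claim is not justified.

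The missing observation is the commutation $[T^\sharp_{\xi_i},\Ric^\bot]=0$ on ${\cal D}$. At $t=0$ it follows from \eqref{E-Q0-p} and \eqref{E-phi-T}: since each $\phi_i^2=-Q$ on ${\cal D}$ and $\Ric^\bot=-\sum_i(T^\sharp_{\xi_i})^2=p\,Q$, one has $T^\sharp_{\xi_i}\Ric^\bot=\Ric^\bot T^\sharp_{\xi_i}$, hence $\sum_i T^\sharp_{\xi_i}\Ric^\bot T^\sharp_{\xi_i}=-(\Ric^\bot)^2$. Plugging this into the general evolution \eqref{E-RbotT-g2} collapses it to the closed ODE
\[
 \dt\Ric^\bot=4\,\Ric^\bot(\Ric^\bot-p\,\id^\bot),
\]
which preserves eigendirections and decouples into $\dot\mu_i=4\mu_i(\mu_i-p)$. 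These scalar equations are solved explicitly, giving global existence for all $t\in\RR$, $\lim_{t\to-\infty}\mu_i(t)=p$ with an exponential rate, and an explicit formula for $\hat g$. The same commutation, via \eqref{E1-b-T-Riem}, shows that each $(T^\sharp_{\xi_i})^2$ obeys the \emph{same} ODE as $(1/p)\Ric^\bot$, so the equalities $(T^\sharp_{\xi_i})^2=-Q_t$ (and more generally \eqref{E-Q0-p}) persist for all $t$; this is what makes $(\phi_i(t),\xi_i,\eta^i,Q_t)$ a weak $p$-Sasakian structure for every $t$, not just a general ``structure-preservation by invariance of a constraint set'' as you sketch.

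Without this commutation, the cross term $\sum_i T^\sharp_{\xi_i}\Ric^\bot T^\sharp_{\xi_i}$ in \eqref{E-RbotT-g2} does not reduce, and the comparison inequalities you allude to are exactly those in the proof of Theorem~5; they yield global existence and two-sided bounds on the eigenvalues of $\Ric^\bot$, but \emph{not} convergence to $p\,\id^\bot$ (indeed, Theorem~5 needs the extra hypotheses (i) or (ii) for that). Your linearization argument only gives attraction in a neighbourhood of the fixed set $\{r_g=p\,g^\bot\}$ and does not, by itself, force an arbitrary initial metric with $r_{g_0}>0$ into that neighbourhood; you also do not exhibit a Lyapunov functional. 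So the ``hard part'' you flag is genuinely unresolved in your sketch, whereas in the paper it disappears once the commutation turns the matrix ODE into an explicitly integrable scalar one.
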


\begin{proof}
Consider the partial Ricci flow, see the formula \eqref{E-GF-Rmix-Phi} in Section~\ref{sec:flow},
of metrics $g_t$ on $M$ with constant $\Phi=p$. 
In~our case, for $t=0$, by the formula \eqref{E-genRicN} of Section~\ref{sec:flow}
and \eqref{E-phi-T}, we get on ${\cal D}$:
\[
 \Ric^\bot = -\sum\nolimits_{\,i} (T_{\xi_i}^\sharp)^2 = -\sum\nolimits_{\,i} \phi_i^2 = p\,Q ,
\]
in particular, $Q>0$, and using \eqref{E-Q0-p} and \eqref{E-phi-T}, we find $T_{\xi_i}^\sharp \Ric^\bot =\Ric^\bot T_{\xi_i}^\sharp$.
Thus,
\[
 \sum\nolimits_i T_{\xi_i}^\sharp \Ric^\bot T_{\xi_i}^\sharp = -(\Ric^\bot)^2.
\]
By~the above, Lemma~\ref{P-PRF-g1} and the formula \eqref{E-RbotT-g2}
of Section~\ref{sec:flow}, we obtain the following ODE:
\[
 \dt\Ric^\bot = 4\Ric^\bot(\Ric^\bot -\,p\,\id^\bot).
\]
By the theory of ODE's, there exists a unique solution $\Ric^\bot(t)$ for $t\in\RR$;
hence, a solution $g_t$ of \eqref{E-GF-Rmix-Phi} exists for $t\in\RR$ and is unique.
Observe that $(\phi_i(t),\xi_i,\eta^i,Q_t)$ with $\phi_i(t),Q_t$ given in \eqref{E-Q-phi}
is a weak $p$-Sasakian structure on $(M,g_t)$.
By uniqueness of a solution, the partial Ricci flow preserves the directions of eigenvectors of $\Ric^\bot$ and
each eigenvalue $\mu_i$ of $\Ric^\bot$ satisfies ODE (for $\mu=\mu_i$)
\[
 \dot\mu_i=4\mu_i\,(\mu_i-p).
\]
This ODE has the following solution (function $\mu(t)$ on $M$ for any $t\in\RR$):
\[
 \mu_i(t)= \frac{\mu_i(0)\,p}{\mu_i(0)+\exp(4\,p\,t)(p-\mu_i(0))}
\]
with $\mu_i(0)>0$ and $\lim\limits_{t\to-\infty}\mu_i(t)=p$.
Thus, $\lim\limits_{t\to-\infty}\Ric^\bot(t)=p\,\id^\bot$.
Let $\{e_i(t)\}$ be a $g_t$-orthonormal frame of ${\cal D}$ of eigenvectors associated with $\mu_i(t)$.
We then have
 $\dt e_i = (\mu_i - p) e_i$.
Since $e_i(t)=z_i(t)\,e_i(0)$ with $z_i(0)=1$, then $\dt \log z_i(t) = \mu_i(t) - p$.
By the~above, $z_i(t) = (\mu_i(t)/\mu_i(0))^{1/4}$. Hence,
\[
 g_t(e_i(0),e_j(0))= z_i^{-1}(t)z_j^{-1}(t)\,g_t(e_i(t),e_j(t)) = \delta_{ij}(\mu_i(0)\mu_j(0)/(\mu_i(t)\mu_j(t)))^{1/4}.
\]
As $t\to-\infty$, $g_t$ converges to the metric $\hat g$ determined by
$\hat g(e_i(0),e_j(0))=\delta_{ij}\sqrt{\mu_i(0)/p}$.
\end{proof}

\begin{corollary}
Let $(\phi_i,\xi_i,\eta^i,Q)$ be a weak 3-Sasakian structure on $(M^{4n+3},g_0)$
such that $\widetilde{\cal D}={\rm span}(\xi_1,\xi_2,\xi_3)$ determines a $\mathfrak{g}$-foliation.
If~$\,r_{g_0}>0$ on the orthogonal distribution ${\cal D}$ then there exists a smooth family of metrics $g_t\ (t\in\RR)$
such that $(\phi_i(t),\xi_i,\eta^i,Q_t)$ is a weak $3$-Sasakian structure on $(M,g_t)$ with
$Q$ and $\phi$ redefined on ${\cal D}$ as
\[
 Q_t = (1/3)\Ric^\bot_t,\quad \phi_i(t)\,|_{\,{\cal D}} = T_{\xi_i}^\sharp(t).
\]
Moreover, $g_t$ converges exponentially fast, as $t\to-\infty$, to a limit metric $\hat g$ with $r_{\hat g}=3\,\hat g^\bot$,
that gives a {$3$-Sasakian structure}.
\end{corollary}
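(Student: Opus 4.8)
The plan is to deduce the Corollary from Theorem~\ref{T-PRF-g2} specialized to $p=3$, and then to identify the limit structure. A weak $3$-Sasakian structure is, by definition, a weak $p$-Sasakian structure with $p=3$, and the remaining hypotheses — that $\widetilde{\cal D}={\rm span}(\xi_1,\xi_2,\xi_3)$ determines a $\mathfrak{g}$-foliation and that $r_{g_0}>0$ on ${\cal D}$ — are exactly those of Theorem~\ref{T-PRF-g2}. Applying that theorem directly yields the smooth family $g_t\ (t\in\RR)$, the weak $3$-Sasakian structures $(\phi_i(t),\xi_i,\eta^i,Q_t)$ on $(M,g_t)$ with $Q_t=(1/3)\Ric^\bot_t$ and $\phi_i(t)\,|_{\,\cal D}=T_{\xi_i}^\sharp(t)$, and the convergence $g_t\to\hat g$ as $t\to-\infty$, exponentially fast, with $r_{\hat g}=3\,\hat g^\bot$. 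So the only new point is the last assertion, that $\hat g$ carries a genuine $3$-Sasakian structure.

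The key observation is that $\hat Q=\id$. On $\widetilde{\cal D}$ one always has $Q\,|_{\widetilde{\cal D}}=\id^\top$ by \eqref{E-Q0-pa}, and this persists in the limit since the $\xi_i$ are unchanged along the flow. On ${\cal D}$, the limit relation $r_{\hat g}=3\,\hat g^\bot$ means precisely that the adjoint $(1,1)$-tensor satisfies $\Ric^\bot_{\hat g}=3\,\id^\bot$, hence $\hat Q\,|_{\,\cal D}=(1/3)\Ric^\bot_{\hat g}=\id^\bot$ by \eqref{E-Q-phi}. Thus $\hat Q=\id$ on all of $TM$. Substituting $Q=\id$ into \eqref{E-Q0-p} and \eqref{E-Q2-p}, the weak almost $3$-contact metric structure underlying $(\hat\phi_i,\xi_i,\eta^i,\id)$ becomes an ordinary almost $3$-contact metric structure on $M^{4n+3}$ (see the text following \eqref{E-Q2-p}). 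Together with the normality of each $\hat\phi_i$ and the contact conditions $\hat g(X,\hat\phi_i Y)=d\eta^i(X,Y)$ from \eqref{E-wq-p-S} — both part of the weak $3$-Sasakian property, hence valid for every $t$ and, by passing to the limit, for $\hat g$ as well — this makes each $(\hat\phi_i,\xi_i,\eta^i)$ Sasakian with common compatible metric $\hat g$, i.e., $(M,\hat g,\hat\phi_i,\xi_i,\eta^i)$ is a $3$-Sasakian structure.

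The step requiring the most care is the passage of the structure data to the limit. Theorem~\ref{T-PRF-g2} guarantees the weak $3$-Sasakian property at every finite $t$, so normality and the contact condition hold for all $t$; what must be checked is that the tensor fields $\phi_i(t)$ and $Q_t$ converge smoothly to tensors satisfying the limiting relations, so that the conditions genuinely pass to $\hat g$. This follows from the explicit solution in the proof of Theorem~\ref{T-PRF-g2}: the eigenvalues of $\Ric^\bot(t)$ converge smoothly, with exponential rate, to $3$, the associated $g_t$-orthonormal frame of ${\cal D}$ rescales smoothly, and $\phi_i(t)=T_{\xi_i}^\sharp(t)$ on ${\cal D}$ while $\phi_i(t)$ is pinned down by the fixed $\xi_i,\eta^i$ on $\widetilde{\cal D}$; hence $\phi_i(t)\to\hat\phi_i$ and $Q_t\to\hat Q=\id$. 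Finally, since this flow keeps $\{\xi_i\}$ a fixed orthonormal frame and deforms $g$ only within ${\cal D}$ (cf.\ the proof of Theorem~\ref{T-PRF-g2}), the $\mathfrak{g}$-foliation structure — in particular the relations $[\xi_i,\xi_j]=c\,\xi_k$ and total geodesy of $\widetilde{\cal D}$ — is preserved in the limit, so $\hat g$ indeed gives a $3$-Sasakian structure on $M$, as claimed.
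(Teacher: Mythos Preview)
Your proposal is correct and follows exactly the paper's approach: apply Theorem~\ref{T-PRF-g2} with $p=3$ and observe that the limit metric gives a $3$-Sasakian structure. The paper's own proof is a single sentence to this effect; your version simply spells out why $r_{\hat g}=3\,\hat g^\bot$ forces $\hat Q=\id$ and hence a genuine $3$-Sasakian structure, which is a welcome elaboration rather than a different argument.
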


\begin{proof}
By Theorem~\ref{T-PRF-g2} with $p=3$, the metric $\hat g$ determines a 3-Sasakian structure.
\end{proof}

A nonsingular Killing vector clearly defines a Riemannian flow; moreover, a Killing vector of unit length
generates a geodesic Riemannian flow.
Recall \cite{b2010} that a $K$-\textit{contact structure} is a contact metric structure, for which the characteristic (Reeb) vector field is Killing.

The following corollary of Theorem~\ref{T-PRF-g2} for $p=1$
generalizes \cite[Proposition~7.4]{b2010} where $R_\xi=\id|_{\,\cal D}$.

\begin{corollary}
Let a Riemannian manifold $(M^{2n+1},g_0)$ admit a unit Killing vector field $\xi$ such that the Jacobi operator $R_\xi$ is positive definite on the  distribution orthogonal to $\xi$. Then there exists a smooth family of metrics $g_t\ (t\in\RR)$,
which converges exponentially fast, as $t\to-\infty$, to a limit metric $\hat g$ with $r_{\hat g}=\hat g^\bot$
that gives a $K$-contact structure on $M$.
\end{corollary}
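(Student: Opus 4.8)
The plan is to reduce the statement to Theorem~\ref{T-PRF-g2} with $p=1$ by first manufacturing, out of the bare datum of the unit Killing field $\xi$, a weak contact metric structure on $(M,g_0)$ to which the partial Ricci flow of Section~\ref{sec:flow} applies. Since $\xi$ is a unit Killing field it has no zeros and $\nabla^{g_0}_\xi\xi=0$, so its orbits foliate $M$ by geodesics; the one-dimensional foliation $\widetilde{\cal D}={\rm span}\,\xi$ is thus totally geodesic and Riemannian, hence a $\mathfrak{g}$-foliation for the abelian Lie algebra $\mathfrak{g}=\RR$. Set ${\cal D}=\xi^{\perp}$ and $\eta=g_0(\xi,\cdot)$. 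Unwinding the definition of the partial Ricci tensor with an adapted orthonormal frame $\{E_1=\xi,\ {\cal E}_j\}$ shows that $\Ric^{\bot}|_{\,\cal D}$ is exactly the Jacobi operator $R_\xi|_{\,\cal D}$, which is positive definite by hypothesis; hence $r_{g_0}>0$ on ${\cal D}$. On the other hand, for a totally geodesic foliation \eqref{E-genRicN} of Section~\ref{sec:flow} specializes (with $p=1$) to $\Ric^{\bot}=-(T_\xi^{\sharp})^2$ on ${\cal D}$, while a direct computation using $\nabla^{g_0}_\xi\xi=0$ and the Killing equation gives $T_\xi^{\sharp}=-\nabla^{g_0}\xi\,|_{\,\cal D}$; in particular $T_\xi^{\sharp}$ is skew-symmetric and nonsingular on ${\cal D}$.

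Next I would put $\phi:=T_\xi^{\sharp}$ on ${\cal D}$ and $\phi\,\xi:=0$, and $Q:=\Ric^{\bot}$ on ${\cal D}$ and $Q\,\xi:=\xi$. Then $Q$ is self-adjoint and positive definite and $Q\,\xi=\xi$, so \eqref{E-Q0} holds; on ${\cal D}$ we have $\phi^2=(\nabla^{g_0}\xi)^2=-\Ric^{\bot}=-Q$ and $\phi^2\xi=0=(-Q+\eta\otimes\xi)\xi$, so \eqref{E-Q1} holds as well. Because $\phi$ is skew-symmetric, \eqref{E-Q2-g} is immediate, and then $g_0(\phi X,\phi Y)=-g_0(X,\phi^2Y)=g_0(X,QY)-\eta(X)\eta(Y)$ gives \eqref{E-Q2}, exactly as in Proposition~\ref{T-04}. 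Finally, \eqref{E-Qd} is checked directly: on ${\cal D}$ it reduces to $d\eta(X,Y)=g_0(\nabla^{g_0}_X\xi,Y)=g_0(X,\phi Y)$ via the Killing equation, and on the vertical directions both sides vanish because $\phi\,\xi=0$ and $\nabla^{g_0}_\xi\xi=0$. Thus $(\phi,\xi,\eta,Q)$ is a weak contact metric structure compatible with $g_0$ whose orthogonal distribution ${\cal D}$ carries positive partial Ricci curvature.

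From here the argument in the proof of Theorem~\ref{T-PRF-g2} applies with $p=1$: running the partial Ricci flow \eqref{E-GF-Rmix-Phi} with $\Phi=1$ and using Lemma~\ref{P-PRF-g1} together with \eqref{E-RbotT-g2} gives the ODE $\dt\Ric^{\bot}=4\,\Ric^{\bot}(\Ric^{\bot}-\id^{\bot})$, which has a unique global solution; each eigenvalue $\mu_i(t)>0$ of $\Ric^{\bot}$ obeys $\dot\mu_i=4\mu_i(\mu_i-1)$ with $\mu_i(t)\to1$ as $t\to-\infty$, the rescaling factors $(\mu_i(t)/\mu_i(0))^{1/4}$ converge, and so $g_t$ converges exponentially fast to a limit metric $\hat g$ with $r_{\hat g}=\hat g^{\bot}$, i.e.\ $\widehat{\Ric}^{\bot}=\id^{\bot}$ on ${\cal D}$. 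For $\hat g$ the redefined tensor $\hat Q=\widehat{\Ric}^{\bot}$ equals $\id$ on ${\cal D}$, so by Example~\ref{Ex-06}(a) the triple $(\hat\phi,\xi,\eta)$ with the metric $\hat g$ is a genuine contact metric structure. Since the partial Ricci flow only rescales $g_0$ along the $\Ric^{\bot}$-eigendistributions inside ${\cal D}$ and both $g_0$ and $\Ric^{\bot}_0$ are invariant under the isometric flow of $\xi$, every $g_t$ (in particular $\hat g$) is $\xi$-invariant and keeps $\xi$ of unit length; hence $\xi$ is a unit Killing field for $\hat g$ and the limiting contact metric structure is $K$-contact.

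The main obstacle I anticipate is not the flow — that part is the ODE analysis already carried out in Theorem~\ref{T-PRF-g2} — but the bookkeeping at the endpoints: verifying the identity $\Ric^{\bot}=-(T_\xi^{\sharp})^2$ for a unit Killing field (so that $Q=\Ric^{\bot}$ makes \eqref{E-Q1} hold), checking \eqref{E-Qd} on the vertical directions, and, at the limit, confirming that the partial Ricci flow preserves the Killing property of $\xi$ so that the conclusion is genuinely a $K$-contact structure and not merely a weak contact metric one. As in \cite[Proposition~7.4]{b2010}, no normality is asserted: the limit structure is $K$-contact but need not be Sasakian.
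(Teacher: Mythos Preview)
Your proposal is correct and follows exactly the route the paper intends: the corollary is stated without proof as the case $p=1$ of Theorem~\ref{T-PRF-g2}, and you supply precisely the missing details---building $(\phi,\xi,\eta,Q)$ from the unit Killing field via $\phi|_{\cal D}=T_\xi^\sharp$ and $Q|_{\cal D}=\Ric^\bot$, then rerunning the ODE argument of that proof with $\Phi=p=1$. Your observation that normality is never used (so one obtains only a weak contact metric structure, and at the limit only a $K$-contact rather than Sasakian structure) is well taken and consistent with the paper's statement and its reference to \cite[Proposition~7.4]{b2010}.
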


\section{Weak $f$-structures}
 Classical $f$-structures can be considered to be higher dimensional analogs of almost contact structures. In this section we propose the study of their "weak" version.

\begin{definition}\label{D-weak_f}\rm
A \textit{weak $f$-structure} on a manifold $M^{2n+p}$ is defined by a $(1,1)$-tensor field $f$ of rank $2n$
and a~nonsingular $(1,1)$-tensor field $Q$ satisfying
\begin{eqnarray}\label{E-fQ-1}
\nonumber
 && f^3 +fQ = 0,\\
 && Q\,\xi=\xi,\quad \xi\in\ker f.
\end{eqnarray}
If there exist vector fields $\xi_i,\,i\in\{1,\ldots, p\}$,
with their dual 1-forms $\eta^i$, satisfying
\begin{equation}\label{E-fQ-2}
 f^2 = -Q +\sum\nolimits_i\eta^i\otimes\xi_i,\quad \eta^i(\xi_j)=\delta^i_j.
\end{equation}
then we obtain a \textit{weak globally framed $f$-structure}.
\end{definition}

A weak globally framed $f$-structure is said to be \textit{normal} if
$N_f + 2\sum\nolimits_{\,i} d\eta^i\otimes\xi_i =0$.

Notice that $TM$ splits into two complementary subbundles ${\cal D}=f(TM)$ and $\widetilde{\cal D}=\ker f$.
Hence $Q$ satisfies
$Q\,|_{\,\widetilde{\cal D}}=\id|_{\,\widetilde{\cal D}}$.

\medskip

Similarly to Example~\ref{Ex-06}, we can construct an example of a {weak almost $f$-manifold}.

It is easy to show that for a weak globally framed $f$-{structure} $(f,\xi_i,\eta^i,Q)$,
\[
 f\,\xi_i=0,\quad \eta^i\circ f=0,\quad i\in\{1,\ldots,p\}.
\]

\noindent
 By \eqref{E-fQ-2}, $f^2\xi_i=0$.
Applying \eqref{E-fQ-1} to $\xi_i$, we get $f\,\xi_i=0$.
 To show $\eta^i\circ f=0$, observe that from \eqref{E-fQ-2} and $Q\,\xi_i=\xi_i$ we get
 $[Q,\,f]=0$.
Since $f\,\xi_i=0$, we also have, applying \eqref{E-fQ-2},
\[
 \eta^i( f X)= f(f^2 X) +Q(f X) = Q(f X) -f(Q X) = [Q, f](X) = 0
\]
for any $X\in {\cal X}_M$, that proves the claim.

A Riemannian metric $g$ is \textit{compatible} with a weak globally framed $f$-structure if
\begin{equation}\label{E-compatible-g-f2}
 g(f X, f Y) = g(X,QY) - \sum\nolimits_i \eta^i(X)\,\eta^i(Y).
\end{equation}
A weak globally framed $f$-structure with a compatible Riemannian metric is called \textit{ a metric weak $f$-structure.}
From \eqref{E-compatible-g-f2} we get $g(X, \xi_i) =\eta^i(X)$,
$\{\xi_i\}$ are orthonormal with respect to~$g$, the tensor $Q$ is self-adjoint and the tensor $f$ is skew-symmetric,
\[
 g(Q X, Y) = g(X, Q Y),\quad
 g(f X, Y) = -g(X, f Y).
\]

\noindent
Similarly to weak almost contact structures, any weak globally framed $f$-structu\-re admits a compatible Riemannian metric if we assume one additional obvious condition, namely,

\begin{proposition}
If the tensor field $f$ of a weak globally framed $f$-structure has a skew-symmetric representation
then the structure admits a compatible metric.
\end{proposition}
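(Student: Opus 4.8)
The plan is to mimic the proof of Proposition~\ref{T-04} almost verbatim, the only new feature being that we now have $p$ Reeb vector fields and $p$ dual $1$-forms to accommodate, rather than one. Recall that $f$ has a skew-symmetric representation means that for every $x\in M$ there is a neighborhood $U_x$ and a local frame $\{e_a\}$ on $U_x$ in which the matrix of $f$ is skew-symmetric. First I would observe that on each such $U_x$ there is a (local) Riemannian metric $g_{U_x}$ for which $\{e_a\}$ is orthonormal; with respect to this metric $f$ is skew-symmetric, so in particular $g_{U_x}(f(X),X)=0$ for all $X\in TU_x$.

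Next I would extract from this the pointwise-bilinear property that survives averaging. The key point is that $g_{U_x}(f(X),X)=0$ for all $X$ is equivalent to the skew-symmetry of $f$ with respect to $g_{U_x}$, and this condition, being the vanishing of a fixed $(0,2)$-tensor $g(f\,\cdot\,,\cdot)+g(\cdot,f\,\cdot\,)$, is preserved under taking finite positive-coefficient linear combinations of metrics. Hence, choosing a partition of unity subordinate to a locally finite cover $\{U_x\}$ and summing the local metrics $g_{U_x}$, I obtain a global Riemannian metric $g$ on $M$ with $g(f(X),X)=0$ for all $X\in{\cal X}_M$, i.e. $f$ is skew-symmetric for $g$:\ $g(fX,Y)=-g(X,fY)$.

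It then remains to verify the compatibility identity \eqref{E-compatible-g-f2}, namely $g(fX,fY)=g(X,QY)-\sum_i\eta^i(X)\eta^i(Y)$. Using the algebraic relations \eqref{E-fQ-1} and \eqref{E-fQ-2} together with the skew-symmetry of $f$ just established, I would compute $g(fX,fY)=-g(X,f^2Y)=-g\bigl(X,(-Q+\sum_i\eta^i\otimes\xi_i)Y\bigr)=g(X,QY)-\sum_i\eta^i(Y)\,g(X,\xi_i)$; so the identity \eqref{E-compatible-g-f2} holds provided $g(X,\xi_i)=\eta^i(X)$. This last normalization can be arranged by working, on each $U_x$, with a frame adapted to the splitting $TM={\cal D}\oplus\widetilde{\cal D}$ (recall $\widetilde{\cal D}=\ker f={\rm span}(\xi_1,\ldots,\xi_p)$), choosing the $\xi_i$ among the frame vectors and rescaling/orthogonalizing them; this is exactly the reasoning already used in Proposition~\ref{T-04} and in the text preceding this proposition. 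Thus the structure admits a compatible metric.

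The only real subtlety — and the step I would be most careful about — is making sure that the local frames can simultaneously (i) put $f$ in skew-symmetric form and (ii) contain the $\xi_i$ as members adapted to $\widetilde{\cal D}=\ker f$, so that after the partition-of-unity averaging one still recovers $g(\xi_i,\xi_j)=\delta_{ij}$ and $\eta^i(X)=g(X,\xi_i)$; since $\xi_i\in\ker f$ and the skew-symmetric representation concerns $f$ on all of $TM$ with $\ker f$ an $f$-invariant (indeed $f$-annihilated) subspace, a skew-symmetric matrix for $f$ automatically respects this block structure, so adapting the frame costs nothing. Everything else is the routine bilinear-algebra verification of \eqref{E-compatible-g-f2}, identical in form to the weak almost contact case.
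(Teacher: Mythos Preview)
Your proposal is correct and follows essentially the same approach as the paper, which simply says ``This is similar to the proof of Proposition~\ref{T-04}.'' You have in fact been more careful than the paper: the step ensuring $g(X,\xi_i)=\eta^i(X)$ --- needed to pass from skew-symmetry of $f$ to the full compatibility identity \eqref{E-compatible-g-f2} --- is glossed over in the paper's proof of Proposition~\ref{T-04}, and your observation that $\ker f$ and $f(TM)$ are automatically orthogonal for any metric making $f$ skew-symmetric is exactly what justifies adapting the local frames without loss.
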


\begin{proof}
This is similar to the proof of Proposition~\ref{T-04}.
\end{proof}

Finally, we define the Sasaki 2-form $F$ putting $F(X, Y) = g(X, f(Y))$ for $X, Y \in {\cal X}_M$.
A metric weak globally framed $f$-manifold will be called a \textit{weak ${\cal K}$-manifold} if it is normal and $d F = 0$.
Two subclasses of weak ${\cal K}$-manifolds can be defined as follows:
\textit{weak almost ${\cal C}$-manifolds} if $d\eta^i = 0$ for any $i$,
and \textit{weak almost ${\cal S}$-manifolds} if $d\eta^i = F$ for any $i$.

\begin{remark}\rm
For $Q=\id$, where $\id : TM \to TM$ is the identity mapping, Definition~\ref{D-weak_f} gives
an $f$-structure, see \cite{Na1966}, and a globally framed $f$-structure, see \cite{gy}.
For an $f$-{manifold}, $TM$ splits into sum of subbundles ${\cal D}=f(TM)$ and $\widetilde{\cal D}=\ker f$,
and that the restriction of $f$ to ${\cal D}$ determines a complex structure on it.
 On~a ${\cal K}$-manifold there exists a $p$-parameter group of isometries generated by the set of Killing vector fields $\{\xi_i\}$, see \cite[Theorem~1.1]{b1970}.
Since the sectional curvature is $K(X,\xi_i)=1$ for unit $X$, then
\[
 r_g(X,X)=p \quad (X\in{\cal D},\ |X|=1).
\]
\end{remark}

We complete this section with the formulation of our main result on weak $f$-structures.

\begin{theorem}\label{T-PRF-g2b}
Let $(f,\xi_i,\eta^i,Q),\ i=1,\ldots,p,$ be a metric weak almost ${\cal S}$-structure on a Riemannian mani\-fold $(M,g_0)$ and $\widetilde{\cal D}={\rm span}(\xi_1,\ldots,\xi_p)$ be the tangent distribution  to a $\mathfrak g$-foliation.
If~$\,r_{g_0}>0$ on the orthogonal distribution ${\cal D}$, then there exists a smooth family of metrics $g_t\ (t\in\RR)$
such that $(f(t),\xi_i,\eta^i,Q_t)$ is a weak almost ${\cal S}$-structure on $(M,g_t)$ with $Q$ and $f$ redefined on ${\cal D}$ as
\[
 Q_t=(1/p)\Ric^\bot_t,\quad f(t)\,|_{{\cal D}}=T_{\xi_i}^\sharp(t).
\]
Moreover, $g_t$  converges exponentially fast, as $t\to-\infty$, to a limit metric $\hat g$ with $r_{\hat g}=p\,\hat g^\bot$,
that gives a {metric almost ${\cal S}$-structure}.
\end{theorem}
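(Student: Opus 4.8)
The plan is to mirror the structure of the proof of Theorem~\ref{T-PRF-g2}, since a metric weak almost ${\cal S}$-structure is essentially a ``single-tensor'' analog of the weak $p$-Sasakian structure treated there. First I would observe that the condition $d\eta^i = F = g(\cdot, f\cdot)$ for every $i$, together with $T(X,Y) = \frac12[X,Y]^\top$, forces the identification $T_{\xi_i}^\sharp = f\,|_{\,{\cal D}}$ for each $i$, which is the exact analog of \eqref{E-phi-T}. Indeed, writing $d\eta^i(X,Y) = \frac12(X\eta^i(Y) - Y\eta^i(X) - \eta^i([X,Y]))$ and using that the $\xi_j$ are Killing and orthonormal, one gets $d\eta^i(X,Y) = -\eta^i([X,Y]) = -g(\xi_i, [X,Y]) = -2\,g(T^\sharp_{\xi_i}X, Y)$ up to sign conventions, and matching with $F(X,Y) = g(X, fY)$ yields $T^\sharp_{\xi_i} = f\,|_{\,{\cal D}}$. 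In particular all the $T^\sharp_{\xi_i}$ coincide, and the foliation is totally geodesic (here $h = 0$ since $\xi_i$ Killing and of constant length gives a geodesic Riemannian flow, and the $\mathfrak g$-foliation hypothesis reduces the flow of metrics to an ODE).

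Next I would compute $\Ric^\bot$ at $t = 0$ on ${\cal D}$. Using the technical formula \eqref{E-genRicN} from Section~\ref{sec:flow} for a totally geodesic $\mathfrak g$-foliation, one has $\Ric^\bot = -\sum_i (T^\sharp_{\xi_i})^2 = -\,p\, f^2\,|_{\,{\cal D}}$. By the compatibility relation \eqref{E-compatible-g-f2} restricted to ${\cal D}$ (where the $\eta^i$ vanish), $g(fX, fY) = g(X, QY)$, i.e. $-f^2\,|_{\,{\cal D}} = Q\,|_{\,{\cal D}}$ since $f$ is skew-symmetric; hence $\Ric^\bot = p\,Q$ on ${\cal D}$, so in particular $\Ric^\bot > 0$ there, consistent with the hypothesis $r_{g_0} > 0$. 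Since $[Q, f] = 0$ (proved in the text for weak globally framed $f$-structures) and $Q\,|_{\,{\cal D}} = -f^2\,|_{\,{\cal D}}$, the operators $T^\sharp_{\xi_i}$ and $\Ric^\bot$ commute on ${\cal D}$, and $\sum_i T^\sharp_{\xi_i}\,\Ric^\bot\,T^\sharp_{\xi_i} = p\, f^2\Ric^\bot\,|_{\,{\cal D}} = -(\Ric^\bot)^2$, exactly as in Theorem~\ref{T-PRF-g2}.

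With these algebraic identities in hand, I would feed them into Lemma~\ref{P-PRF-g1} and formula \eqref{E-RbotT-g2} to derive the same ODE $\dt\Ric^\bot = 4\,\Ric^\bot(\Ric^\bot - p\,\id^\bot)$ along the partial Ricci flow \eqref{E-GF-Rmix-Phi} with $\Phi = p$. Standard ODE theory gives a unique global solution $\Ric^\bot(t)$ for $t\in\RR$, hence a unique $g_t$; the flow preserves eigendirections, each eigenvalue $\mu_i(t)$ solves $\dot\mu_i = 4\mu_i(\mu_i - p)$ with explicit solution converging to $p$ as $t\to-\infty$, and the $g_t$-orthonormal eigenframe scales by $z_i(t) = (\mu_i(t)/\mu_i(0))^{1/4}$, giving exponential convergence of $g_t$ to $\hat g$ with $r_{\hat g} = p\,\hat g^\bot$. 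Finally I would check that the redefined data $Q_t = (1/p)\Ric^\bot_t$ and $f(t)\,|_{\,{\cal D}} = T^\sharp_{\xi_i}(t)$ still satisfy \eqref{E-fQ-1}, \eqref{E-fQ-2} and the compatibility \eqref{E-compatible-g-f2} with $g_t$ (this follows because $Q_t\,|_{\,{\cal D}} = -f(t)^2\,|_{\,{\cal D}}$ by construction and the $\widetilde{\cal D}$-part is unchanged), that normality and $d\eta^i = F_t$ persist (the $\eta^i$ and $\xi_i$ do not move, and $F_t(X,Y) = g_t(X, f(t)Y)$ matches $d\eta^i$ because $T^\sharp_{\xi_i}(t)$ is by definition the $g_t$-integrability operator), and that in the limit $Q_{\hat g}\,|_{\,{\cal D}} = \id$, so $\hat g$ gives a genuine metric almost ${\cal S}$-structure.

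The main obstacle I anticipate is not the ODE analysis — which is verbatim the same as in Theorem~\ref{T-PRF-g2} — but verifying that the metric weak almost ${\cal S}$-structure axioms are \emph{stable along the flow}: one must confirm that redefining $f(t)$ as the $g_t$-integrability tensor $T^\sharp_{\xi_i}(t)$ keeps $f(t)$ skew-symmetric for $g_t$, keeps $\ker f(t) = \widetilde{\cal D}$, preserves $f(t)^3 + f(t)Q_t = 0$, and above all keeps the structure \emph{normal} with $d\eta^i = F_t$ throughout. The normality condition $N_f + 2\sum_i d\eta^i\otimes\xi_i = 0$ involves the Nijenhuis torsion of the moving tensor $f(t)$, so one must argue that the specific form of the partial Ricci flow on a $\mathfrak g$-foliation changes $f$ only by the controlled rescaling $f(t)\,|_{\,{\cal D}} = T^\sharp_{\xi_i}(t)$ and that this rescaling is compatible with the fixed exterior-derivative data $d\eta^i$ — essentially because $d\eta^i$ is determined by the (unchanged) Lie bracket structure of the $\mathfrak g$-foliation and the projection onto $\widetilde{\cal D}$, which is the same for all $g_t$ since the $\xi_i$ and the splitting $TM = {\cal D}\oplus\widetilde{\cal D}$ are flow-invariant. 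I would isolate this as the one genuinely new verification, citing the technical section for the behavior of $T^\sharp$ under the flow, and dispatch the rest by pointing to the proof of Theorem~\ref{T-PRF-g2}.
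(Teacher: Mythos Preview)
Your proposal is correct and follows exactly the route the paper indicates: the paper's own proof of Theorem~\ref{T-PRF-g2b} consists solely of the remark that it is analogous to the proof of Theorem~\ref{T-PRF-g2}, and you have carried out precisely that analogy, including the identification $T^\sharp_{\xi_i}=f|_{\cal D}$, the computation $\Ric^\bot=p\,Q$ on ${\cal D}$, the commutation $[Q,f]=0$ yielding $\sum_i T^\sharp_{\xi_i}\Ric^\bot T^\sharp_{\xi_i}=-(\Ric^\bot)^2$, and the resulting ODE and convergence argument. Your flagged ``obstacle'' about stability of the weak almost ${\cal S}$-axioms along the flow is a legitimate point of care, but the paper treats the parallel verification in Theorem~\ref{T-PRF-g2} with a single sentence (``Observe that\ldots is a weak $p$-Sasakian structure on $(M,g_t)$''), so you are already being more scrupulous than the source.
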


\begin{proof} The proof is analogous to the proof of Theorem~\ref{T-PRF-g2},
therefore we leave it to the reader as an exercise.
\end{proof}

\section{Weak para-$\phi$-structure}
\label{sec:flow4}

Classical almost para-$\phi$-structures generalizes the almost product and the almost paracontact structures.
In this section we propose the study of its "weak" version.

\begin{definition}\label{D-weak_para-phi}\rm
A \textit{weak para-$\phi$-structure} on a manifold $M^{2n+p}$ is defined by a $(1,1)$-tensor field $\phi$
of rank $2n$ and a nonsingular $(1,1)$-tensor field $Q$ satisfying
\begin{eqnarray*}
 && \phi^3-\phi\,Q=0,\\
 && Q\,\xi=\xi,\quad \xi\in\ker\phi.
\end{eqnarray*}
If there exist vector fields $\xi_i,\ 1\le i\le p$,
and dual 1-forms $\{\eta^i\}$, satisfying the following conditions:
\begin{equation*}
 \phi^2 = Q - \sum\nolimits_i\eta^i\otimes \xi_i,\quad \eta^i(\xi_j)=\delta^i_{j},
\end{equation*}
then $M$ is called a \textit{weak almost para-$\phi$-manifold with complemented frames}, or, in short, a weak almost para-$\phi$-manifold.
\end{definition}

The kernel distribution $\widetilde{\cal D}=\ker\phi$ has dimension~$p$. Set ${\cal D}=\phi(TM)$.
Hence, $Q|_{\,\widetilde{\cal D}}=\id_{\widetilde{\cal D}}$.

In a similar way as in  Example~\ref{Ex-06}, we can construct an example of a weak almost para-$\phi$-manifold.
Using standard calculations, we can show that if the manifold $M^{2n+p}$ has a \textit{weak para-$\phi$-structure} $(\phi,\xi_i,\eta^i,Q)$, then
\[
 \phi\,\xi_i=0,\quad  \eta^i\circ\phi=0,\quad i\in\{1,\ldots,p\}.
\]
A weak almost para-$\phi$-manifold $M$ with a \textit{compatible} Riemannian metric $g$, that is
\begin{equation*}
 g(\phi X, \phi Y) = -g(X,QY) + \sum\nolimits_i \eta^i(X)\eta^i(Y),
\end{equation*}
is called a \textit{metric weak almost para-$\phi$-manifold}.
By \eqref{E-Q2-g}, a metric weak almost para-$\phi$-manifold has self-adjoint $Q$ and skew-symmetric $\phi$,
\[
 g(Q X, Y) = g(X, Q Y),\quad
 g(\phi X, Y) = -g(X, \phi Y),
\]
and the vector fields $\{\xi_i\}$ are orthonormal with respect to~$g$.

It is not difficult to demonstrate as for other structures that if $\phi$ of a weak para-$\phi$-structure
has a skew-symmetric representation, then the structure admits a compatible metric.

A weak almost para-$\phi$-structure is said to be \textit{normal}~if $N_\phi - 2\sum\nolimits_i d\eta^i\otimes\xi_i=0$.

 On a metric weak almost para-$\phi$-manifold, we define a 2-form by
\[
 F(X, Y) = g(X, \phi(Y)),\quad X, Y \in {\cal X}_M.
\]
A \textit{weak para-$S$-manifold} is a normal weak almost para-$\phi$-manifold with $F=d\eta^i$ for any $i$.

\begin{remark}\rm
For $Q=\id$, where $\id : TM \to TM$ is the identity mapping, Definition~\ref{D-weak_para-phi} gives
a para-$\phi$-structure and a weak almost para-$\phi$-manifold, see \cite{fp-m,tar}.
The~kernel distribution $\widetilde{\cal D}=\ker\phi$ has dimension~$p$ and $\pm1$ eigen-distributions of $\phi$,
denoted by ${\cal D}^+$ and ${\cal D}^-$, respectively, have the same dimension equal to $n$.
Set ${\cal D}:=\phi(TM)={\cal D}^+\oplus {\cal D}^-$.
For a {para-$S$-manifold}, $\nabla_X\,\xi_i = -\phi(X)$ for all $i$, see \cite{fp-m,od}, and
\[
 R(X,Y)\xi_i = \sum\nolimits_j \big[\eta^j(X)\phi^2(Y) -\eta^j(Y)\phi^2(X)\big].
\]
For $X=\xi_i$, $Y\in{\cal D}$ we get $R(\xi_i,Y)\xi_i = Y$. Hence,
 $r_g(Y,Y)=p$ for all $Y\in{\cal D},\ |Y|=1$.
\end{remark}

\begin{theorem}\label{T-PRF-g2c}
Let $(\phi,\xi_i,\eta^i,Q)$ be a metric weak para-${\cal S}$-structure on $(M,g)$
and the distribution $\widetilde{\cal D}={\rm span}(\xi_1,\ldots,\xi_p)$  be tangent to a $\mathfrak{g}$-foliation.
If~$\,r_{g_0}>0$ on the orthogonal distribution ${\cal D}$ then there exists a smooth family of metrics $g_t\ (t\in\RR)$
such that $(\phi(t),\xi_i,\eta^i,Q_t)$ is a metric weak para-${\cal S}$-structure on $(M,g_t)$ with $Q$ and $\phi$ redefined on ${\cal D}$ as
\[
 Q_t=(1/p)\Ric^\bot_t,\quad \phi(t)\,|_{\,{\cal D}}=T_{\xi_i}^\sharp(t).
\]
Moreover, $g_t$ converges exponentially fast, as $t\to-\infty$, to a limit metric $\hat g$
with $r_{\hat g}=p\,\hat g^\bot$, that gives a {metric almost para-${\cal S}$-structure}.
\end{theorem}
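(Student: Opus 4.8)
The plan is to imitate the proof of Theorem~\ref{T-PRF-g2} almost verbatim, since a metric weak para-$\mathcal S$-structure has the same formal features that drive the earlier argument. First I would run the partial Ricci flow \eqref{E-GF-Rmix-Phi} with the constant $\Phi=p$, and compute $\Ric^\bot$ at $t=0$ in terms of the structure. Exactly as in the para-$S$ remark preceding the theorem, normality together with $F=d\eta^i$ yields $\nabla_X\xi_i=-\phi(X)$ for $X\in{\cal D}$, hence $T_{\xi_i}^\sharp=\phi|_{\cal D}$ (the para-$\phi$ analogue of \eqref{E-phi-T}); and the structural identity $\phi^2=Q-\sum_i\eta^i\otimes\xi_i$ restricted to ${\cal D}$ gives $\phi^2|_{\cal D}=Q|_{\cal D}$. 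Plugging into \eqref{E-genRicN} of Section~\ref{sec:flow} with $h=T=0$ (the foliation is totally geodesic) produces $\Ric^\bot=-\sum_i(T_{\xi_i}^\sharp)^2=-p\,\phi^2|_{\cal D}=-p\,Q$ on ${\cal D}$ --- note the sign is opposite to the Sasakian case, so one must check that this still gives $Q>0$; here $\phi$ is skew-symmetric for $g$, so $-\phi^2|_{\cal D}$ is positive semidefinite, and since $Q$ is nonsingular it is in fact positive definite, so $\Ric^\bot>0$ on ${\cal D}$, consistent with the hypothesis $r_{g_0}>0$.

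Next I would verify the commutation relation $T_{\xi_i}^\sharp\,\Ric^\bot=\Ric^\bot\,T_{\xi_i}^\sharp$; since $\Ric^\bot=-p\,\phi^2|_{\cal D}$ is a polynomial in $\phi$, this is automatic, giving $\sum_i T_{\xi_i}^\sharp\,\Ric^\bot\,T_{\xi_i}^\sharp=-(\Ric^\bot)^2/1$ after using $\sum_i(T_{\xi_i}^\sharp)^2=-\Ric^\bot$ --- one should be slightly careful that for $p>1$ the para-$\phi$-structure has only a single tensor $\phi$ with all $T_{\xi_i}^\sharp=\phi|_{\cal D}$, so the sum over $i$ simply multiplies by $p$ and the identity reads $\sum_i T_{\xi_i}^\sharp\,\Ric^\bot\,T_{\xi_i}^\sharp=p\,(\phi|_{\cal D})\,\Ric^\bot\,(\phi|_{\cal D})=-(\Ric^\bot)^2$. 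Feeding this together with Lemma~\ref{P-PRF-g1} and formula \eqref{E-RbotT-g2} into the flow yields the same ODE $\dt\Ric^\bot=4\,\Ric^\bot(\Ric^\bot-p\,\id^\bot)$. By ODE theory the solution $\Ric^\bot(t)$ exists and is unique for all $t\in\RR$, hence so does $g_t$; eigenvalues $\mu_i(t)$ satisfy $\dot\mu_i=4\mu_i(\mu_i-p)$ with the explicit solution displayed in the proof of Theorem~\ref{T-PRF-g2}, so $\mu_i(t)\to p$ exponentially as $t\to-\infty$ and $\Ric^\bot(t)\to p\,\id^\bot$.

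The last step is to check that $(\phi(t),\xi_i,\eta^i,Q_t)$ with $Q_t=(1/p)\Ric^\bot_t$ and $\phi(t)|_{\cal D}=T_{\xi_i}^\sharp(t)$ remains a metric weak para-${\cal S}$-structure for each $t$, and that the limit $\hat g$ gives a genuine (non-weak) metric almost para-${\cal S}$-structure. For the former one checks the algebraic identities of Definition~\ref{D-weak_para-phi} and compatibility \eqref{E-compatible-g-f2}-type condition for $g_t$: since along the flow $\phi(t)$ is again $g_t$-skew-symmetric and $Q_t$ is $g_t$-self-adjoint by construction, and $\phi(t)^2|_{\cal D}=-(T_{\xi_i}^\sharp(t))^2=Q_t|_{\cal D}$ by the ODE-invariant relation $\Ric^\bot=-p\,(\phi|_{\cal D})^2$, the weak structure equations hold; one also notes $\phi(t)\xi_i=0$ and $\eta^i$ unchanged. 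Normality and $F_t=d\eta^i$ are preserved because on a $\mathfrak g$-foliation the flow is a pointwise ODE on ${\cal D}$ that does not alter $\xi_i,\eta^i$ nor the Lie bracket structure along leaves. In the limit, $Q_{\hat g}|_{\cal D}=(1/p)\,p\,\id^\bot=\id^\bot$ and $Q|_{\widetilde{\cal D}}=\id$, so $\hat Q=\id$, which by the Remark (the case $Q=\id$) reduces the weak para-${\cal S}$-structure to the classical metric almost para-${\cal S}$-structure, and $r_{\hat g}=p\,\hat g^\bot$ follows from $\Ric^\bot(t)\to p\,\id^\bot$. I expect the main obstacle to be the sign bookkeeping in the para case --- making sure $\Ric^\bot=-p\,\phi^2|_{\cal D}$ is positive despite the minus sign (resolved by skew-symmetry of $\phi$) and that formula \eqref{E-genRicN} is applied with the correct sign convention --- together with the verification that $\phi(t)$ redefined from $T_{\xi_i}^\sharp(t)$ genuinely satisfies $\phi(t)^3-\phi(t)Q_t=0$ and not the $f$-structure relation $f^3+fQ=0$; both follow from $\phi(t)^2|_{\cal D}=Q_t|_{\cal D}$, but this is the place where an error would most easily creep in.
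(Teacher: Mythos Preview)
Your approach is exactly the paper's: the paper states only that the proof is analogous to that of Theorem~\ref{T-PRF-g2} and omits it, and you carry out precisely that analogy, arriving at the same ODE $\partial_t\Ric^\bot=4\Ric^\bot(\Ric^\bot-p\,\id^\bot)$ and the same convergence as $t\to-\infty$.

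One slip in the sign bookkeeping you (rightly) flagged as the danger spot: from $\phi(t)|_{\cal D}=T_{\xi_i}^\sharp(t)$ you get $\phi(t)^2|_{\cal D}=\big(T_{\xi_i}^\sharp(t)\big)^2$, not $-\big(T_{\xi_i}^\sharp(t)\big)^2$. Combined with $\Ric^\bot_t=-p\,\big(T_{\xi_i}^\sharp(t)\big)^2$ this yields $\phi(t)^2|_{\cal D}=-(1/p)\Ric^\bot_t$, so the para-relation $\phi^2|_{\cal D}=Q|_{\cal D}$ is matched by $Q_t|_{\cal D}=-(1/p)\Ric^\bot_t$ (negative definite on ${\cal D}$, consistent with $g_t$-skew-symmetry of $\phi(t)$) rather than $+(1/p)\Ric^\bot_t$. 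This discrepancy is already present in the displayed formula of the theorem and does not affect the flow argument or the convergence; it only concerns how one labels the limiting $Q$.
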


\begin{proof}
 The proof  is  analogous to the proof of Theorem~\ref{T-PRF-g2}, therefore we omit it.
\end{proof}

\section{Technical results}
\label{sec:flow}

In the study of flows there are two important problems to consider: the limit sets and the stationary/fixed points.
Here, we will investigate the first problem (the second problem is considered in Sections~\ref{sec:main}--\ref{sec:flow4}).
Recall that a flow of Riemannian metrics $g_t$ on a smooth manifold $M$ is an evolution of a geometric structure, e.g.,~\cite{ah},
\begin{equation}\label{E-egf0}
 \dt g = B(g),
\end{equation}
where $B(g)$ is a symmetric (0,2)-tensor field.
The Ricci flow appears when $B(g)=-2\Ric(g)$.
Here, $\Ric=\tr_{\,24}R$ is the Ricci curvature of the curvature tensor $R$.

The Levi-Civita connection for \eqref{E-egf0} evolves as, e.g.,~\cite{ah},
\begin{equation}\label{eq2}
 2\,g_t((\dt\nabla^t)(X, Y), Z)=(\nabla^t_X\,B_t)(Y,Z)+(\nabla^t_Y\,B_t)(X,Z)-(\nabla^t_Z\,B_t)(X,Y)
\end{equation}
for all $X,Y,Z\in {\cal X}_M$.
Next, we recall some notions and results of \cite{r24}.

\begin{proposition}[see \cite{r24}]
The geometric quantities of a totally geodesic foliation related to ${\cal D}$ evolve by \eqref{E-egf0}
$($with a symmetric $(0,2)$-tensor $B(g)\,)$ according to
\begin{eqnarray}
\label{E1-S-A}
 2\,\dt A_\xi\eq -\nabla_{\xi}\,B^\sharp +[ A_\xi- T^\sharp_\xi,\ B^\sharp],\qquad
 \dt T^\sharp_\xi = -B^\sharp\,T^\sharp_\xi,\qquad \xi\in\widetilde{\cal D}.
\end{eqnarray}
\end{proposition}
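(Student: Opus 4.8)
The plan is to differentiate the defining relations of $A_\xi$ and $T^\sharp_\xi$ along \eqref{E-egf0} and substitute the Levi-Civita evolution \eqref{eq2}. Fix $t$-independent sections $X,Y$ of $\mathcal D$ and $\xi$ of $\widetilde{\mathcal D}$; this is legitimate since for the flows in play $B$ is supported on $\mathcal D$, so the splitting $TM=\widetilde{\mathcal D}\oplus\mathcal D$ and the $g_t$-orthonormality of the $\xi_i$ are preserved, and $A^t_\xi,T^{\sharp,t}_\xi$ remain honest $(1,1)$-tensors on the fixed distribution $\mathcal D$. Starting from \eqref{E-def-bT} and using $g_t(Y,\xi)=0$, one has at each time $t$ the equivalent relations
\[
 g_t(A^t_\xi X,Y)=\tfrac12\bigl(g_t(\nabla^t_X Y,\xi)+g_t(\nabla^t_Y X,\xi)\bigr),\qquad
 g_t(T^{\sharp,t}_\xi X,Y)=\tfrac12\bigl(g_t(\nabla^t_X Y,\xi)-g_t(\nabla^t_Y X,\xi)\bigr),
\]
so that $A_\xi$ and $T^\sharp_\xi$ are precisely the $g_t$-symmetric and $g_t$-skew parts of the operator $X\mapsto-(\nabla^t_X\xi)^\perp=(A_\xi+T^\sharp_\xi)X$, whence $(A_\xi-T^\sharp_\xi)=(A_\xi+T^\sharp_\xi)^{\ast}$ is its $g_t$-adjoint.

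Next I would apply $\dt$ to these identities, using $\dt g_t=B_t$, $\dt(\nabla^t_X Y)=(\dt\nabla^t)(X,Y)$ (the vector fields are $t$-fixed), and $(A_\xi+T^\sharp_\xi)X\in\mathcal D$. The left-hand side of the first identity gives $B(A_\xi X,Y)+g_t((\dt A_\xi)X,Y)$; the right-hand side gives $\tfrac12\bigl(B(\nabla_X Y,\xi)+B(\nabla_Y X,\xi)\bigr)+g_t((\dt\nabla)(X,Y),\xi)$, using that $(\dt\nabla)(X,Y)$ is symmetric in $X,Y$ by \eqref{eq2}. Since $B$ is supported on $\mathcal D$, the terms $B(\nabla_X Y,\xi)$, $B(\nabla_Y X,\xi)$ vanish; substituting \eqref{eq2} with $Z=\xi$, the term $-\tfrac12(\nabla_\xi B)(X,Y)=-\tfrac12\,g_t((\nabla_\xi B^\sharp)X,Y)$ appears — this is the source of $-\nabla_\xi B^\sharp$ in the claim. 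For the remaining two terms $(\nabla_X B)(Y,\xi)$, $(\nabla_Y B)(X,\xi)$ I would expand by the Leibniz rule and again invoke that $B$ vanishes on pairs not both in $\mathcal D$ (and the totally geodesic hypothesis, which is what makes the leftover $\widetilde{\mathcal D}$-valued pieces drop), leaving only $-B(Y,(\nabla_X\xi)^\perp)=g_t(B^\sharp(A_\xi+T^\sharp_\xi)X,Y)$ and $-B(X,(\nabla_Y\xi)^\perp)=g_t((A_\xi-T^\sharp_\xi)B^\sharp X,Y)$. Collecting everything gives
\[
 g_t((\dt A_\xi)X,Y)=\tfrac12\,g_t\!\bigl((B^\sharp(A_\xi+T^\sharp_\xi)+(A_\xi-T^\sharp_\xi)B^\sharp-\nabla_\xi B^\sharp)X,Y\bigr)-g_t(B^\sharp A_\xi X,Y),
\]
and the elementary identity $\tfrac12 B^\sharp(A_\xi+T^\sharp_\xi)+\tfrac12(A_\xi-T^\sharp_\xi)B^\sharp-B^\sharp A_\xi=\tfrac12\,[A_\xi-T^\sharp_\xi,\,B^\sharp]$ then yields $2\dt A_\xi=-\nabla_\xi B^\sharp+[A_\xi-T^\sharp_\xi,B^\sharp]$.

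For $T^\sharp_\xi$ the computation is much shorter: after antisymmetrisation in $X,Y$ the symmetric tensor $(\dt\nabla)(X,Y)$ cancels outright, so no second derivatives of $B$ survive, and the only zeroth-order term left is $\tfrac12\bigl(B(\nabla_X Y,\xi)-B(\nabla_Y X,\xi)\bigr)=B(T(X,Y),\xi)$, which is $0$ because $B$ is supported on $\mathcal D$; hence $g_t((\dt T^\sharp_\xi)X,Y)=-B(T^\sharp_\xi X,Y)=-g_t(B^\sharp T^\sharp_\xi X,Y)$, i.e. $\dt T^\sharp_\xi=-B^\sharp T^\sharp_\xi$ (alternatively one differentiates $g_t(T^{\sharp,t}_\xi X,Y)=\tfrac12\,g_t([X,Y],\xi)$ directly, the bracket being metric-independent). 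I expect the only genuine difficulty to be the bookkeeping: faithfully expanding \eqref{eq2} and the Leibniz identities for $\nabla B$, checking which of the many $B$-terms are annihilated by the adaptedness of $B$ and the totally geodesic assumption, and — the point that forces the answer into a commutator rather than an asymmetric expression — carefully retaining the term $B(A_\xi X,Y)$ that arises because $A_\xi,T^\sharp_\xi$ are the symmetric and skew parts of $X\mapsto-(\nabla_X\xi)^\perp$ with respect to the \emph{evolving} metric $g_t$.
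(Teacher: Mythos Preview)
Your proof is correct and follows essentially the same route as the paper's: both differentiate the defining identities for $A_\xi$ and $T^\sharp_\xi$, substitute the connection-evolution formula \eqref{eq2}, and reduce via the splitting $C_\xi=A_\xi+T^\sharp_\xi$ (the paper names this the \emph{co-nullity tensor} and passes through an intermediate formula for $\dt h$, whereas you go straight to $\dt A_\xi$; this is purely organizational). One small imprecision: the cancellations you attribute to ``the totally geodesic hypothesis'' in fact come entirely from your assumption that $B$ is supported on ${\cal D}$ (so $B(\cdot,\xi)=0$), which the paper also uses implicitly; the totally geodesic condition on the leaves plays no direct role in the two displayed identities.
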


\begin{proof}
Note that $\dt T=0$.
For all $X,Y\in{\cal D}$, using (\ref{eq2}) and (\ref{E-def-bT}), we find
\begin{eqnarray*}
 2\,g\big(\dt(\nabla^t_X Y),\,\xi\big)
 \eq (\nabla^t_X B)(Y,\xi)+(\nabla^t_Y B)(X,\xi) -(\nabla^t_\xi B)(X,Y)\\
 \eq -(\nabla^t_\xi B)(X,Y) -B(Y,\nabla^t_X \xi) -B(X,\nabla^t_Y\,\xi).
\end{eqnarray*}
This and symmetry of $\dt\nabla^t$ give
\begin{equation}\label{E1-S-b}
 2\,g(\dt h(X,Y),\xi) = -(\nabla_\xi\,B)(X,Y) +B(Y, C_\xi\,X) +B(X, C_\xi\,Y).
\end{equation}
Here, the \textit{co-nullity (splitting) tensor} is defined by
\begin{equation*}
 C_\xi\,X=-(\nabla_{X}\, \xi)^\bot,\quad
 X \in {\cal D},\ \ \xi\in\widetilde{\cal D}.
\end{equation*}
\noindent
We~have the identities
\begin{equation}\label{E-CC**}
 A_\xi=(C_\xi + C_\xi^{\,*})/2,\quad T^\sharp_\xi =(C_\xi - C_\xi^{\,*})/2,\quad
 C_\xi = A_\xi + T^\sharp_\xi.
\end{equation}
Using
\begin{equation*}
 g(A_\xi(X),\,Y) = g(h(X,Y),\,\xi),\quad g(T^\sharp_\xi(X),\,Y) = g(T(X,Y),\,\xi)\quad (X,Y\in{\cal D}),
\end{equation*}
we then find
\begin{eqnarray*}
 g(\dt A_\xi(X),Y) = \dt g(h(X,Y),\xi) -(\dt g)(A_\xi(X),Y),\\
 g(\dt T^\sharp_\xi(X),Y) = -(\dt g)(T^\sharp_\xi(X),Y).
\end{eqnarray*}
The above, \eqref{E1-S-b} and \eqref{E-CC**}, yield (\ref{E1-S-A}).
\end{proof}

Some authors consider flows of metrics on a foliated manifold with the metric varying along transverse (to the leaves) distribution,
e.g., second-order quasilinear transversally parabolic flows \cite{bhv}, which
can be applied to other flows like the transverse Ricci flow and Sasaki-Ricci flow. In~\cite{r4}--\cite{RWo-1},
they study flows of metrics on a foliation called \textit{extrinsic geometric flows}: although the metric varies along normal to the leaves distribution,
such flows are parabolic along the~leaves.

An analogue in a sense of the Ricci flow for foliations, is the \textit{partial Ricci flow}, see \cite{r24}.

\begin{definition}[see~\cite{r24}]\rm
The \textit{normalized~partial Ricci flow} is defined~by
\begin{equation}\label{E-GF-Rmix-Phi}
 \dt g = -2\,r_g +2\,\Phi\,g^\bot,
\end{equation}
where $g=g^\top+ g^\bot$
and $\Phi:M\to{\mathbb R}$ is a leaf-wise constant function.
\end{definition}

The flow \eqref{E-GF-Rmix-Phi}, preserves metric on the leaves of $\calf$ and the orthogonality of vectors to $T\calf$; if~$\calf$ is either totally umbilical, totally geodesic or harmonic foliation for $t=0$ then it has the same property for all $t>0$.
It was proposed \cite{r24} as the main tool to prescribe the partial Ricci and the mixed sectional curvature of a totally geodesic foliation.

The principal difference of the partial Ricci flow from other known flows with metric varying along ${\cal D}$ is that the PDE's under consideration are parabolic along the leaves and not along ${\cal D}$.

\smallskip

Next, we study tangentially Lie foliations, whose characteristic foliation is regular and its dimension
is equal to the dimension of the Lie algebra $\mathfrak{g}$. We restrict our attention to \textit{compatible metrics} that
is bundle-like metrics, for which the foliation is totally geodesic and the chosen characteristic vector fields $ \xi_i$ are orthonormal.
For compatible metrics, the characteristic foliation is Riemannian, and the mixed sectional curvature is nonnegative; thus $\Ric^\bot\ge0$.

\begin{remark}\label{R-01}\rm
Consider the behavior of tensor fields associated to a Riemannian metric with respect to diffeomorphisms.
Let $(M,g)$ be a Riemannian metric and let $f:M \rightarrow M'$ be a diffeomorphism. Then $f$ is an isometry between $(M,g)$ and $(M',g'=f_{*}g)$. The Levi-Civita connections $\nabla^g$ and $\nabla^{g'}$ are $f$-related, i.e.,
\[
 df(\nabla_X^gY) = \nabla_{df(X)}^{g'}df(Y)
\]
for any vector fields $X,Y$ on $M.$ Therefore, cf. \cite[Propositions~VI.1.2 and VI.1.4]{KN}, the torsion tensor $T^g$ is $f$-related to $T^{g'}$  and the curvature tensor field $R^g$ is $f$-related to $R^{g'}$. If we just consider $f$-related orthonormal bases then any tensor field $B$ obtained via contractions, traces etc., are $f$-related, i.e.,
\[
 df(B(g))_x  = B(g')_{f(x)}.
\]
In the case of ``partial" tensor fields, we have to consider diffeomorphisms $f$, which preserve the foliation, i.e.,
 $df(T{\mathcal F}) \subset T{\mathcal F}$.
Then $ df (T{\mathcal F}^{\perp _g} )  \subset T{\mathcal F}^{\perp _{g'}}$; thus, ``partial" tensors are also related.
\end{remark}

\begin{lemma}\label{P-PRF-g1}
 The class of compatible metrics of $\mathfrak{g}$-foliations is preserved by the flow \eqref{E-GF-Rmix-Phi}.
\end{lemma}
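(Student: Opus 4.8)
The plan is to show that the partial Ricci flow \eqref{E-GF-Rmix-Phi} keeps us inside the class of \emph{compatible metrics} of a $\mathfrak g$-foliation, i.e.\ metrics that are bundle-like, make the characteristic foliation $\mathcal F$ totally geodesic, and for which the chosen frame $\{\xi_i\}$ of $T\mathcal F$ is orthonormal. Since $B(g) = -2r_g + 2\Phi\,g^\bot$ is a symmetric $(0,2)$-tensor supported on $\mathcal D=T\mathcal F^\perp$ (note $r_g(X,Y)$ depends only on $X^\bot,Y^\bot$, and $g^\bot$ likewise), the flow acts only in the transverse directions. First I would verify the three defining properties are infinitesimally preserved: (i) $\dt g(\xi_i,\cdot)=0$, so the $\xi_i$ stay orthonormal and orthogonal to $\mathcal D$ — this is immediate because $B(g)$ vanishes on $\widetilde{\mathcal D}$; (ii) the leaf metric $g^\top$ is unchanged, so $\mathcal F$ stays Riemannian/bundle-like with respect to the evolving transverse metric in the appropriate holonomy-invariance sense; (iii) total geodesy of $\mathcal F$ persists — this is exactly the content recalled after Definition of \eqref{E-GF-Rmix-Phi} (the flow preserves the totally geodesic property), and on the level of the shape operator $A_{\xi}$ of $\mathcal F$ it follows from the first evolution equation in \eqref{E1-S-A} applied with the roles of $\mathcal D$ and $\widetilde{\mathcal D}$ interchanged, since $A_{\xi}=0$ at $t=0$ gives $\dt A_{\xi}=0$.

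The key remaining point is that the evolved metric is still a metric of the \emph{same} $\mathfrak g$-foliation, in the sense that the transverse parallelism structure (the Lie parallelism $\{\xi_1,\dots,\xi_p\}$ along leaves preserving $\mathcal F^\perp$, with fixed structure constants) survives. Here I would use Remark~\ref{R-01}: the partial Ricci tensor $r_g$ and the normal projection $g^\bot$ are built from $g$ by contractions that are natural under foliation-preserving diffeomorphisms, so for any such diffeomorphism $f$ we have $df\bigl(B(g)\bigr)_x = B(f_*g)_{f(x)}$. Applying this to the holonomy pseudogroup transformations and to the flows of the $\xi_i$, one sees that the bracket relations $[\xi_i,\xi_j]=\sum_k c_{ij}^{\,k}\xi_k$ and the condition that $\mathcal L_{\xi_i}$ preserves $\mathcal F^\perp$ are unaffected: the $\xi_i$ are held fixed, only $g$ changes, and $g$-orthogonality to $\widetilde{\mathcal D}$ is preserved by (i). Since the characteristic foliation is regular with $\dim = \dim\mathfrak g$ and the $\xi_i$ give the parallelism, the evolved $g_t$ remains compatible.

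A cleaner way to organize all of this is to quotient by the flow's symmetry: because on a $\mathfrak g$-foliation the $\xi_i$ are a global frame of $T\mathcal F$ and $g_0$ is compatible, the geometry of $\mathcal D$ is determined by the $\Ric^\bot$ endomorphism, and by the remark after Definition of \eqref{E-GF-Rmix-Phi} the flow restricted to such data reduces to an ODE (in $t$) for the transverse metric along $\mathcal D$ — exactly the reduction exploited in Theorem~\ref{T-PRF-g2}. An ODE with smooth right-hand side has a unique solution through each compatible initial datum, and uniqueness forces the solution to respect every symmetry of the initial datum, in particular the holonomy pseudogroup and the flows of the $\xi_i$; hence compatibility is preserved for all $t$ in the maximal interval of existence.

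The main obstacle I anticipate is item (ii): carefully checking that ``bundle-like'' — a condition on how the transverse metric varies along the leaves — is preserved, rather than merely the pointwise orthogonality and the total geodesy. The cleanest route is to note that $B(g)$ is a transverse tensor natural under the holonomy pseudogroup (again via Remark~\ref{R-01}), so if $g_0$ is holonomy-invariant in the transverse directions then so is $\dt g|_{t=0}$, and this property propagates along the flow by the uniqueness argument above. With (i), (ii), (iii) in hand and the parallelism unchanged, the evolved metric $g_t$ satisfies all the requirements of a compatible metric of the given $\mathfrak g$-foliation, which is the assertion of the lemma.
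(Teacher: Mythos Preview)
Your proposal is essentially correct and, in its fourth paragraph and the ``cleaner way'' passage, it lands on precisely the paper's argument: naturality of $r_g$ and $g^\bot$ under foliation-preserving diffeomorphisms (Remark~\ref{R-01}) shows that $\phi^*g_t$ solves the same flow for any isometry $\phi$ of $g_0$ preserving $\widetilde{\cal D}$, and uniqueness then gives $\phi^*g_t=g_t$, so all such symmetries (in particular the local flows of the $\xi_i$) persist, whence $g_t$ stays compatible. The paper's proof is exactly this, without the preliminary direct checks (i)--(iii) that you sketch first.

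Two small corrections. First, in your ``cleaner way'' paragraph you invoke \emph{ODE} uniqueness; that is slightly circular, since the reduction to an ODE already presupposes that $g_t$ remains compatible. What is needed (and what the paper uses) is uniqueness for the full flow \eqref{E-GF-Rmix-Phi} as a leaf-wise parabolic PDE on metrics, cf.~\cite{r24}; only after symmetries are established does the ODE picture become available. Second, your appeal to \eqref{E1-S-A} ``with the roles of ${\cal D}$ and $\widetilde{\cal D}$ interchanged'' for total geodesy of $\calf$ is not quite right: \eqref{E1-S-A} is derived for the shape operator of ${\cal D}$ under the hypothesis that $\calf$ is already totally geodesic, so it does not directly give the evolution of the second fundamental form of $\calf$. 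The preservation of total geodesy you want is the statement quoted just after the definition of \eqref{E-GF-Rmix-Phi}, which in turn follows from the symmetry/uniqueness argument rather than from \eqref{E1-S-A}.
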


\begin{proof} Consider solution of \eqref{E-egf0} with the initial condition $g=g_0$ -- a compatible metric,
where $B(g)=-2\,r_g$ is a symmetric (0,2)-tensor depending on the metric $g$.
 If a~smooth diffeomorphism $\phi \colon M \rightarrow M$ is an isometry of $g$, then $\phi^*g=g$.
Therefore, $\phi^*r_g = r_{\phi^*g} = r_g$ for any isometry $\phi$ of $g$ preserving $\widetilde{\cal D}$,
see Remark~\ref{R-01}. Then
\[
 \partial_t (\phi^*g_t ) = \phi^*(\partial_t g_t ) = \phi^*(r_{g_t}) = r_{\phi^*g_t}.
\]
Hence the family $\phi^*g_t $ is the evolution of $\phi^*g_0 =g_0$.
From the uniqueness of solution (the linearization of \eqref{E-GF-Rmix-Phi} at $g_0$
is a leaf-wise parabolic PDE, see \cite{r24}), we get $\phi^*g_t = g_t $ for any $t$.
Thus,~the isometry $\phi$ of $g$ is also an isometry of any metric of its evolution.
\end{proof}

\begin{lemma}
The following equalities hold for any $\mathfrak{g}$-foliation:
\begin{eqnarray}\label{E-genricA-W}
 R(\xi_i, X)\xi_j \eq\nabla_{\xi_i}T^\sharp_{\xi_j}(X)-T^\sharp_{\xi_j} T^\sharp_{\xi_i}(X),\quad
 X\in{\cal D},\ \xi_i,\xi_j \in \widetilde{\cal D},\\
\label{E-genRicN-W}
 \Ric^\bot \eq -\sum\nolimits_{i} (T_{\xi_i}^\sharp)^2,
\end{eqnarray}
 $\|T\|$ is leafwise constant and
\begin{equation}\label{E-AT-g}
 \nabla_{\xi}\Ric^\bot = [ \Ric^\bot,\ T^\sharp_\xi ], \quad \xi \in \widetilde{\cal D}.
\end{equation}
\end{lemma}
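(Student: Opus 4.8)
The statement has four parts for a $\mathfrak{g}$-foliation: the curvature identity \eqref{E-genricA-W}, the formula \eqref{E-genRicN-W} for $\Ric^\bot$, leafwise-constancy of $\|T\|$, and the commutator identity \eqref{E-AT-g}. I~would treat these in that order, since each relies on the previous. For \eqref{E-genricA-W}, recall that for a totally geodesic foliation $h=0$, so the co-nullity tensor reduces to $C_{\xi}=T^\sharp_{\xi}$ by \eqref{E-CC**}, i.e.\ $(\nabla_X\xi)^\bot = -T^\sharp_{\xi}(X)$ for $X\in{\cal D}$. Using that $\widetilde{\cal D}$ is integrable (the $\xi_i$ span a Lie algebra) and the leaves are totally geodesic, one has $\nabla_{\xi_i}\xi_j\in\widetilde{\cal D}$, so the standard expansion of $R(\xi_i,X)\xi_j = \nabla_{\xi_i}\nabla_X\xi_j - \nabla_X\nabla_{\xi_i}\xi_j - \nabla_{[\xi_i,X]}\xi_j$, projected onto ${\cal D}$ and simplified with $\nabla_X\xi_j = -T^\sharp_{\xi_j}(X) + (\nabla_X\xi_j)^\top$, collapses to $\nabla_{\xi_i}T^\sharp_{\xi_j}(X) - T^\sharp_{\xi_j}T^\sharp_{\xi_i}(X)$; this is the $\xi$-direction analogue of the evolution equation $\dt T^\sharp_\xi = -B^\sharp T^\sharp_\xi$ already proved. (In fact \eqref{E-genricA-W} is a known Riccati-type identity for the co-nullity tensor of a totally geodesic distribution along a Riemannian foliation, cf.\ \cite{r2010,r24}, and I~would cite it rather than rederive the full computation.)

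\textbf{From the curvature identity to the trace formulas.} Once \eqref{E-genricA-W} holds, set $i=j$ and take the ${\cal D}$-trace: by definition $\Ric^\bot(X) = \sum_i (R(\xi_i,X)\xi_i)^\bot$, so
\[
 \Ric^\bot(X) = \sum\nolimits_i\big(\nabla_{\xi_i}T^\sharp_{\xi_i}(X) - (T^\sharp_{\xi_i})^2(X)\big).
\]
For a $\mathfrak{g}$-foliation with compatible (bundle-like, totally geodesic) metric one has $\nabla_{\xi_i}T^\sharp_{\xi_i}=0$ on ${\cal D}$ — this is where the Riemannian-foliation hypothesis enters: the operators $T^\sharp_{\xi_i}$ encode the holonomy of $\widetilde{\cal D}$, and parallel transport along a leaf in direction $\xi_i$ preserves them because the characteristic flow is by isometries (the $\xi_i$ are Killing along leaves for a compatible metric). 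More carefully, I~would sum \eqref{E-genricA-W} over $i=j$ and note $\sum_i \nabla_{\xi_i}T^\sharp_{\xi_i}$ is the divergence-type term that vanishes by the symmetry $g(\nabla_{\xi_i}T^\sharp_{\xi_i}X,Y) = -g(T^\sharp_{\xi_i}X,\nabla_{\xi_i}\xi\dots)$ combined with skew-symmetry of $T^\sharp_{\xi_i}$; this yields \eqref{E-genRicN-W}. Then $\|T\|^2 = -\sum_i \tr (T^\sharp_{\xi_i})^2 = \tr\Ric^\bot$, and its leafwise derivative is $\xi_k\big(\tr\Ric^\bot\big) = \tr(\nabla_{\xi_k}\Ric^\bot)$, which I~will show is zero in the last step.

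\textbf{The commutator identity \eqref{E-AT-g}, and the main obstacle.} Differentiating \eqref{E-genRicN-W} in direction $\xi_k\in\widetilde{\cal D}$ and using $\nabla_{\xi_k}T^\sharp_{\xi_i} = T^\sharp_{\xi_i}T^\sharp_{\xi_k} + (R(\xi_k,\cdot)\xi_i)$-type relations from \eqref{E-genricA-W} — or more directly, from the identity $\nabla_{\xi_k}T^\sharp_{\xi_i}(X) = R(\xi_k,X)\xi_i + T^\sharp_{\xi_i}T^\sharp_{\xi_k}(X)$ and its skew-symmetrization in $i,k$ using that the Lie bracket $[\xi_i,\xi_k]$ lies in $\widetilde{\cal D}$ with structure constants — one computes
\[
 \nabla_{\xi_k}\Ric^\bot = -\sum\nolimits_i\big(\nabla_{\xi_k}T^\sharp_{\xi_i}\cdot T^\sharp_{\xi_i} + T^\sharp_{\xi_i}\cdot\nabla_{\xi_k}T^\sharp_{\xi_i}\big),
\]
and substituting $\nabla_{\xi_k}T^\sharp_{\xi_i}$ and carefully collecting the $R$-terms (which cancel in pairs by the first Bianchi identity, since $\sum_i R(\xi_k,\cdot)\xi_i\,T^\sharp_{\xi_i}+T^\sharp_{\xi_i}\,R(\xi_k,\cdot)\xi_i$ is controlled by \eqref{E-genricA-W} again) leaves exactly $[\Ric^\bot, T^\sharp_{\xi_k}]$. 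The hard part is the bookkeeping in this last step: one must handle the mixed terms $T^\sharp_{\xi_i}T^\sharp_{\xi_k}T^\sharp_{\xi_i}$ and show that the sum over $i$ of commutators with the individual $T^\sharp_{\xi_i}$ telescopes into a single commutator with $\Ric^\bot$, which works precisely because all the $T^\sharp_{\xi_i}$ are built from the same $\mathfrak{g}$-action and the structure constants $\eps$-type terms are antisymmetric and cancel. I~expect this combinatorial cancellation — verifying that no residual $R$-curvature or bracket terms survive — to be the only real obstacle; everything else is either standard foliation geometry or a direct consequence of the compatible-metric hypothesis. Once \eqref{E-AT-g} is established, taking its trace gives $\xi_k(\tr\Ric^\bot) = \tr[\Ric^\bot,T^\sharp_{\xi_k}] = 0$, so $\|T\|$ is leafwise constant, completing the lemma.
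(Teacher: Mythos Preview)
Your plan for \eqref{E-genricA-W}, \eqref{E-genRicN-W}, and the leafwise constancy of $\|T\|$ matches the paper's: cite the general co-nullity and $\Ric^\bot$ identities for totally geodesic foliations from \cite{r24} (the paper's \eqref{E-genricA}--\eqref{E-genRicN}), then specialize using $h=0$ and $A_\xi=0$. One small point: rather than arguing that $\sum_i\nabla_{\xi_i}T^\sharp_{\xi_i}$ vanishes, the paper simply invokes the formula $\Ric^\bot=\Div_\calf h-\sum_i(A_{\xi_i}^2+(T^\sharp_{\xi_i})^2)$ directly, where $\Div_\calf h=0$ is immediate from $h=0$; this avoids your hand-wavy step.

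The real divergence is in \eqref{E-AT-g}. You propose differentiating \eqref{E-genRicN-W} in direction $\xi_k$, substituting $\nabla_{\xi_k}T^\sharp_{\xi_i}$ from \eqref{E-genricA-W}, and hoping the curvature and triple-$T^\sharp$ terms cancel to leave $[\Ric^\bot,T^\sharp_{\xi_k}]$. You correctly flag this bookkeeping as the obstacle, and indeed it is not obvious: the mixed terms $T^\sharp_{\xi_i}T^\sharp_{\xi_k}T^\sharp_{\xi_i}$ and the $R_{\xi_k,\xi_i}T^\sharp_{\xi_i}+T^\sharp_{\xi_i}R_{\xi_k,\xi_i}$ contributions do not telescope by any general Bianchi-type identity alone. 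The paper bypasses this entirely with a flow trick: by Lemma~\ref{P-PRF-g1} the partial Ricci flow \eqref{E-GF-Rmix-Phi} preserves compatible metrics, so $A_\xi\equiv0$ along the flow and hence $\dt A_\xi=0$. Plugging $B^\sharp=-2\Ric^\bot+2\Phi\,\id^\bot$ into the already-established evolution \eqref{E1-S-A}$_1$,
\[
 0=2\,\dt A_\xi=-\nabla_\xi B^\sharp+[A_\xi-T^\sharp_\xi,B^\sharp]
 =2\nabla_\xi\Ric^\bot+2[T^\sharp_\xi,\Ric^\bot],
\]
using $\nabla_\xi\id^\bot=0$ for Riemannian foliations. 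This gives \eqref{E-AT-g} in one line, with no combinatorics. Your direct route may be salvageable, but the paper's argument shows the identity is really a consequence of the flow preserving the Riemannian-foliation condition, not of any special algebra of the $T^\sharp_{\xi_i}$.
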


\begin{proof}
The following equalities hold for a totally geodesic foliation ${\cal F}$, see \cite{r24}:
\begin{eqnarray}\label{E-genricA}
 R(\xi_1, X)\xi_2 \eq (\nabla_{\xi_1} C)(\xi_2,X) - C(\xi_2, C(\xi_1, X)),\quad
 \xi_1,\xi_2\in\widetilde{\cal D},\ X\in{\cal D},  \\
\label{E-genRicN}
 \Ric^\bot \eq \Div_{\calf} h -\sum\nolimits_{i}\big(A_{\xi_i}^2+(T_{\xi_i}^\sharp)^2\big)^\flat.
\end{eqnarray}
where the $\calf$-divergence of a $(1,2)$-tensor $h$ is a $(0,2)$-tensor
\[
 (\Div_\calf h)(X_1,X_{2})
 =\sum\nolimits_{i} g((\nabla_{\xi_i} h) (X_1,X_2), \xi_i).
\]
By Lemma~\ref{P-PRF-g1}, assumption $h=0,$ and \eqref{E-genricA} and \eqref{E-genRicN}, we obtain
\eqref{E-genricA-W} and \eqref{E-genRicN-W}.
By~Lemma~\ref{P-PRF-g1}, one may consider the partial Ricci flow family $g_t$ of compatible metrics.
By \eqref{E1-S-A}$_1$ with $B=-2\,r_g+2\Phi\,g^\bot$, using $A_\xi=0$ and $\nabla_\xi\,\id^\bot =0$ (for Riemannian foliations), we get \eqref{E-AT-g}.
Taking trace of \eqref{E-AT-g} and using \eqref{E-genRicN-W}, we find $\xi(\|T\|^2)=0$.
\end{proof}

\begin{proposition}
The flow \eqref{E-GF-Rmix-Phi} for compatible metrics on $\mathfrak{g}$-foliations obeys the ODE's
\begin{eqnarray}\label{E1-b-T-Riem}
 \dt T^\sharp_\xi \eq 2(\Ric^\bot - \Phi\id^\bot)\,T^\sharp_\xi,\\
\label{E-RbotT-g2}
 \dt\Ric^\bot\eq 2\Ric^\bot(\Ric^\bot -2\Phi\id^\bot) -2\sum\nolimits_i T_{\xi_i}^\sharp \Ric^\bot T_{\xi_i}^\sharp.
\end{eqnarray}
\end{proposition}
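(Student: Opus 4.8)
The plan is to start from the evolution equations \eqref{E1-S-A} specialized to the normalized partial Ricci flow, where $B(g) = -2\,r_g + 2\,\Phi\,g^\bot$, and to use heavily that for compatible metrics of a $\mathfrak{g}$-foliation we have $A_\xi = 0$ and the foliation is Riemannian. First I would compute the operator $B^\sharp$ acting on ${\cal D}$: since $r_g$ has adjoint $\Ric^\bot$ on ${\cal D}$ and $g^\bot$ has adjoint $\id^\bot$ there, we get $B^\sharp\,|_{\,{\cal D}} = -2\,\Ric^\bot + 2\,\Phi\,\id^\bot$. Substituting this into the second equation of \eqref{E1-S-A}, namely $\dt T^\sharp_\xi = -B^\sharp\,T^\sharp_\xi$, immediately yields
\[
 \dt T^\sharp_\xi = (2\,\Ric^\bot - 2\,\Phi\,\id^\bot)\,T^\sharp_\xi = 2(\Ric^\bot - \Phi\,\id^\bot)\,T^\sharp_\xi,
\]
which is \eqref{E1-b-T-Riem}.

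For \eqref{E-RbotT-g2} I would differentiate the identity \eqref{E-genRicN-W}, i.e. $\Ric^\bot = -\sum_i (T^\sharp_{\xi_i})^2$, in $t$. Using the product rule, $\dt\Ric^\bot = -\sum_i\big( (\dt T^\sharp_{\xi_i})\,T^\sharp_{\xi_i} + T^\sharp_{\xi_i}\,(\dt T^\sharp_{\xi_i})\big)$, and then inserting \eqref{E1-b-T-Riem} gives
\[
 \dt\Ric^\bot = -2\sum\nolimits_i\big[(\Ric^\bot - \Phi\,\id^\bot)(T^\sharp_{\xi_i})^2 + T^\sharp_{\xi_i}(\Ric^\bot - \Phi\,\id^\bot)T^\sharp_{\xi_i}\big].
\]
The first group of terms telescopes back to $(\Ric^\bot - \Phi\,\id^\bot)$ times $-\sum_i (T^\sharp_{\xi_i})^2 = \Ric^\bot$, contributing $-2(\Ric^\bot - \Phi\,\id^\bot)\Ric^\bot = -2(\Ric^\bot)^2 + 2\,\Phi\,\Ric^\bot$. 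In the second group, $-2\sum_i T^\sharp_{\xi_i}\,\Phi\,\id^\bot\,T^\sharp_{\xi_i} = -2\,\Phi\sum_i (T^\sharp_{\xi_i})^2 = 2\,\Phi\,\Ric^\bot$, while $-2\sum_i T^\sharp_{\xi_i}\Ric^\bot T^\sharp_{\xi_i}$ stays as the genuinely non-scalar term. Collecting, $\dt\Ric^\bot = -2(\Ric^\bot)^2 + 4\,\Phi\,\Ric^\bot - 2\sum_i T^\sharp_{\xi_i}\Ric^\bot T^\sharp_{\xi_i} = 2\,\Ric^\bot(\Ric^\bot - 2\,\Phi\,\id^\bot) - 2\sum_i T^\sharp_{\xi_i}\Ric^\bot T^\sharp_{\xi_i}$, which is exactly \eqref{E-RbotT-g2}, provided one checks the sign convention relating $\dt g$ to $B$ so that $\dt\Ric^\bot = -2(\Ric^\bot)^2 + \dots$ rather than $+2(\Ric^\bot)^2$.

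The step I expect to require the most care is the legitimacy of differentiating the pointwise identity \eqref{E-genRicN-W} in $t$ and of substituting \eqref{E1-b-T-Riem}: one must confirm that along the flow the foliation remains totally geodesic with $A_\xi(t) = 0$ and stays Riemannian (so that $\nabla_\xi\,\id^\bot = 0$ and the identity $\Ric^\bot = -\sum_i (T^\sharp_{\xi_i})^2$ persists), which is guaranteed by Lemma~\ref{P-PRF-g1} together with the stated persistence properties of the flow \eqref{E-GF-Rmix-Phi}. One also must be careful that the tensors $T^\sharp_{\xi_i}$ here are viewed as $(1,1)$-tensors on ${\cal D}$ whose raising/lowering is done with the $t$-dependent metric; since the $\xi_i$ are held fixed and orthonormal throughout, and $\dt(g^\top) = 0$, no extra terms arise from the $t$-dependence of the inner product on $\widetilde{\cal D}$. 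With those points settled, both ODE's follow by the direct algebraic manipulation sketched above.
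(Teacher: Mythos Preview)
Your approach is exactly the one the paper uses: apply \eqref{E1-S-A}$_2$ with $B=-2r_g+2\Phi g^\bot$ to get \eqref{E1-b-T-Riem}, then differentiate \eqref{E-genRicN-W} in $t$ and insert \eqref{E1-b-T-Riem} to obtain \eqref{E-RbotT-g2}; the persistence of $A_\xi=0$ along the flow is indeed what makes \eqref{E-genRicN-W} usable for all $t$, and this is precisely what Lemma~\ref{P-PRF-g1} guarantees.

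There is, however, a pair of sign slips in your intermediate algebra that you then disguise as a ``sign convention'' issue. From $\sum_i(T^\sharp_{\xi_i})^2=-\Ric^\bot$, the first group $-2(\Ric^\bot-\Phi\,\id^\bot)\sum_i(T^\sharp_{\xi_i})^2$ equals $+2(\Ric^\bot-\Phi\,\id^\bot)\Ric^\bot=2(\Ric^\bot)^2-2\Phi\Ric^\bot$, not $-2(\Ric^\bot)^2+2\Phi\Ric^\bot$. Likewise, in the second group the $\Phi$-part is $-2\sum_i T^\sharp_{\xi_i}(-\Phi\,\id^\bot)T^\sharp_{\xi_i}=+2\Phi\sum_i(T^\sharp_{\xi_i})^2=-2\Phi\Ric^\bot$, not $+2\Phi\Ric^\bot$. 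With these corrections the collected expression is $2(\Ric^\bot)^2-4\Phi\Ric^\bot-2\sum_i T^\sharp_{\xi_i}\Ric^\bot T^\sharp_{\xi_i}=2\Ric^\bot(\Ric^\bot-2\Phi\,\id^\bot)-2\sum_i T^\sharp_{\xi_i}\Ric^\bot T^\sharp_{\xi_i}$ on the nose, and no appeal to sign conventions is needed.
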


\begin{proof}
By \eqref{E1-S-A}$_2$ with $B=-2\,r_g+2\Phi\,g^\bot$, using $A_\xi=0$ and $\nabla_\xi\, g^\bot =0$ (for Riemannian foliations), we get \eqref{E1-b-T-Riem}.
Derivation of \eqref{E-genRicN-W}$_1$ in $t$ and using \eqref{E1-b-T-Riem} yield \eqref{E-RbotT-g2}.
\end{proof}

\begin{example}\rm
Let $\calf$ be a one-dimensional $\mathfrak{g}$-foliation by geodesics spanned by a unit vector field~$\xi$ (e.g., $\xi$ is a unit Killing vector). Then $\Ric^\bot=R_\xi$ (where $R_{\xi}: X\to R(\xi, X)\xi$ is the Jacobi operator in the $\xi$-direction), $\nabla_{\xi}R_{\xi}=0$ and \eqref{E-RbotT-g2} reads as
\begin{equation*}
 \dt\Ric^\bot = -2\,R_\xi(R_\xi + \Phi\id^\bot).
\end{equation*}
\end{example}

The following theorem shows that metrics of $\mathfrak{g}$-foliations with certain conditions can be deformed to metrics of the same type but with leafwise constant partial Ricci curvature.

\begin{theorem}
Let $\calf$ be a $\mathfrak{g}$-foliation of $(M, g_0)$ spanned by $p\ge1$ orthonormal vector fields~$\{\xi_i\}$.
If~$r_{g_0}>0$ on the orthogonal distribution ${\cal D}$, then \eqref{E-GF-Rmix-Phi} with $\Phi>0$ has a~unique solution~$g_t\ (t\in\RR)$.
Moreover,

{\rm (i)} if the following recurrent relation holds for some $\lambda: M\to\RR$ $($and $t=0):$
\begin{equation}\label{E-lambda}
 \nabla_\xi\Ric^\bot=\xi(\lambda)(\Ric^\bot - (1/p)\,\|T\|^2\id^\bot),\quad \xi\in\widetilde{\cal D},
\end{equation}
then there exists $\lim\limits_{\,t\to-\infty} \Ric^\bot(g_t)=(\Phi-(\Delta_\calf\,\lambda+|\nabla^\calf\lambda|^2)/4)\id^\bot$.

{\rm (ii)} if $\,\Ric^\bot=\mu\id^\bot$ for some leafwise constant positive function $\mu: M\to\RR$,
then
$g_t$ converges exponentially fast, as $t\to-\infty$, to a limit metric $\hat g$ with $r_{\hat g}=\Phi\,\hat g^\bot$.
\end{theorem}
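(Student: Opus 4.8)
The plan is to treat the partial Ricci flow \eqref{E-GF-Rmix-Phi} with $\Phi>0$ as an ODE on the space of compatible metrics of the $\mathfrak{g}$-foliation, exactly as in the proof of Theorem~\ref{T-PRF-g2}. By Lemma~\ref{P-PRF-g1}, the flow preserves compatible metrics, so all of the algebraic identities \eqref{E-genRicN-W}, \eqref{E-AT-g}, \eqref{E1-b-T-Riem} and \eqref{E-RbotT-g2} are available along the flow. First I would argue short-time existence and uniqueness from the leaf-wise parabolicity of the linearization (as cited from \cite{r24}); the global-in-time existence for $t\in\RR$ will then follow once the curvature quantities are controlled, because on a $\mathfrak{g}$-foliation the flow reduces to the ODE system \eqref{E1-b-T-Riem}--\eqref{E-RbotT-g2} for the fiberwise endomorphisms $T^\sharp_\xi$ and $\Ric^\bot$ of ${\cal D}$, and positivity of $\Ric^\bot$ at $t=0$ propagates (each eigenvalue stays positive, as seen from the scalar ODE $\dot\mu=\ldots$ derived below), preventing blow-up as $t\to-\infty$.

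For part (ii), which is the clean case, the hypothesis $\Ric^\bot=\mu\,\id^\bot$ with $\mu$ leaf-wise constant forces $\sum_i T^\sharp_{\xi_i}\Ric^\bot T^\sharp_{\xi_i} = \mu\sum_i(T^\sharp_{\xi_i})^2 = -\mu\,\Ric^\bot = -\mu^2\id^\bot$ by \eqref{E-genRicN-W}. Substituting into \eqref{E-RbotT-g2} collapses the system to the single scalar ODE $\dot\mu = 2\mu(\mu-2\Phi)+2\mu^2 = 4\mu(\mu-\Phi)$, whose solution with $\mu(0)>0$ is $\mu(t)=\mu(0)\Phi\big/\big(\mu(0)+e^{4\Phi t}(\Phi-\mu(0))\big)$ and satisfies $\mu(t)\to\Phi$ as $t\to-\infty$, with exponential rate $4\Phi$. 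Uniqueness of the ODE solution guarantees that $\Ric^\bot(t)=\mu(t)\id^\bot$ stays a multiple of the identity for all $t$, so $\Ric^\bot(t)\to\Phi\,\id^\bot$, i.e. $r_{\hat g}=\Phi\,\hat g^\bot$. To pass from convergence of the endomorphism $\Ric^\bot(t)$ to convergence of the metrics themselves I would repeat the frame computation of Theorem~\ref{T-PRF-g2}: pick a $g_t$-orthonormal eigenframe $\{e_i(t)\}$ of ${\cal D}$, use $\dt e_i=(\mu-\Phi)e_i$ (from \eqref{E1-b-T-Riem} restricted to eigenvectors, together with the evolution of $g$) to get $e_i(t)=z_i(t)e_i(0)$ with $z_i(t)=(\mu(t)/\mu(0))^{1/4}$, hence $g_t(e_i(0),e_j(0))=\delta_{ij}(\mu(0)/\mu(t))^{1/2}\to\delta_{ij}\sqrt{\mu(0)/\Phi}$, which is the limit metric $\hat g$, and the convergence is exponentially fast because $\mu(t)$ is.

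For part (i) the recurrence \eqref{E-lambda} is the substitute for the "$\Ric^\bot$ proportional to the identity" structure: it says $\nabla_\xi\Ric^\bot$ is a pure scalar multiple of the traceless-adjusted tensor $\Ric^\bot-\tfrac1p\|T\|^2\id^\bot$, with the scalar being $\xi(\lambda)$. Comparing with \eqref{E-AT-g} (which gives $\nabla_\xi\Ric^\bot=[\Ric^\bot,T^\sharp_\xi]$) should pin down how the commutator term in \eqref{E-RbotT-g2}, namely $\sum_i T^\sharp_{\xi_i}\Ric^\bot T^\sharp_{\xi_i}$, can be re-expressed using \eqref{E-genRicN-W} and the leaf-wise constancy of $\|T\|$ (established in the lemma); the upshot I expect is that the trace $S_{\rm mix}=\tr\Ric^\bot$ satisfies a leaf-wise reaction–diffusion equation whose diffusion operator is $\Delta_\calf$ and whose zeroth-order data involve $\lambda$ through the combination $\Delta_\calf\lambda+|\nabla^\calf\lambda|^2$ — this is where the conjugation by $e^{\lambda/?}$ or the substitution $u=e^{c\lambda}$ linearizes the term, yielding the stated limit $(\Phi-(\Delta_\calf\lambda+|\nabla^\calf\lambda|^2)/4)\id^\bot$. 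I would then run the maximum-principle / ODE-comparison argument leaf-wise to conclude the limit exists as $t\to-\infty$.

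The main obstacle is part (i): one must show that the recurrence \eqref{E-lambda} is genuinely preserved by the flow (so that the reduction to a scalar equation for $S_{\rm mix}$ is legitimate for all $t$, not just $t=0$), and then identify precisely the correction term $\tfrac14(\Delta_\calf\lambda+|\nabla^\calf\lambda|^2)$ coming from the leaf-wise diffusion. The parabolic (not ODE) character of the equation for $S_{\rm mix}$ along the leaves means one cannot simply invoke ODE uniqueness as in part (ii); instead one needs a parabolic maximum principle on each leaf together with control of $\lambda$, which is the technically delicate step. Parts concerning existence/uniqueness for $t\in\RR$ and the exponential rate are then routine consequences of the explicit scalar solutions.
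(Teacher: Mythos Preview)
Your treatment of part~(ii) is correct and essentially identical to the paper's: the collapse to $\dot\mu=4\mu(\mu-\Phi)$, the explicit solution, and the frame computation $e_i(t)=(\mu(t)/\mu(0))^{1/4}e_i(0)$ all match.

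There are, however, two genuine gaps.

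\textbf{Existence and uniqueness for general $g_0$.} You assert that positivity of $\Ric^\bot$ propagates and ``prevents blow-up'', citing the scalar ODE for an eigenvalue. But in the absence of the extra hypotheses of (i) or (ii), the eigenvalues of $\Ric^\bot$ do \emph{not} satisfy a decoupled scalar ODE: the term $-2\sum_i T^\sharp_{\xi_i}\Ric^\bot T^\sharp_{\xi_i}$ in \eqref{E-RbotT-g2} couples the eigenspaces. Positivity alone gives no upper bound. The paper handles this by sandwiching: from $\mu_{\min}\id^\bot\le\Ric^\bot\le\mu_{\max}\id^\bot$ one gets
\[
 \mu_{\min}\Ric^\bot \le -\sum\nolimits_i T^\sharp_{\xi_i}\Ric^\bot T^\sharp_{\xi_i} \le \mu_{\max}\Ric^\bot,
\]
hence matrix differential inequalities $2\Ric^\bot(\Ric^\bot+(\mu_{\min}-2\Phi)\id^\bot)\le\dt\Ric^\bot\le 2\Ric^\bot(\Ric^\bot+(\mu_{\max}-2\Phi)\id^\bot)$, and compares with the scalar model $\dot z=2z(z+\alpha-2\Phi)$, which has global solutions with $z(t)\to 2\Phi-\alpha$ as $t\to-\infty$. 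This comparison is the missing ingredient in your existence argument.

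\textbf{Part (i).} Your plan is to extract a leaf-wise parabolic PDE for the trace $S_{\rm mix}=\tr\Ric^\bot$ and apply a maximum principle, and you correctly flag this as the main obstacle. The paper's route is both different and simpler: the recurrence \eqref{E-lambda}, combined with \eqref{E-AT-g} (both sides are traceless), forces $\Delta_\calf\Ric^\bot=(4\Psi_1+\Phi)\Ric^\bot-\Psi_2\id^\bot$ with $\Psi_1=\tfrac14(\Delta_\calf\lambda+|\nabla^\calf\lambda|^2)-\Phi$ and $\Psi_2=\tfrac1p|\nabla^\calf\lambda|^2\|T\|^2$. The point is that this expresses the coupling term in \eqref{E-RbotT-g2} as a linear combination of $\Ric^\bot$ and $\id^\bot$, so the flow preserves eigendirections and every eigenvalue $\mu_j$ satisfies the \emph{ODE}
\[
 \dot\mu_j=4\mu_j(\mu_j+\Psi_1)-\Psi_2,
\]
with fixed points $\mu_\pm=\tfrac12(-\Psi_1\pm\sqrt{\Psi_1^2+\Psi_2})$; since $\mu_j(0)>0$, each $\mu_j$ tends to the attractor $\mu_+=\Phi-\tfrac14(\Delta_\calf\lambda+|\nabla^\calf\lambda|^2)$ as $t\to-\infty$. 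No parabolic maximum principle is needed, and there is no issue of showing the recurrence is preserved by the flow --- once the eigendirections are fixed the evolution is scalar. Your proposed reduction to a PDE for the trace would lose the eigenvalue-by-eigenvalue control and, as you suspected, would require substantially more work (if it succeeds at all).
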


\begin{proof}
For a bundle-like metric, $\calf$ is Riemannian; thus $A_i=0$ and $C_i=T^\sharp_i$.
By conditions,
\[
 0<\mu_{\rm min}\id^\bot \le \Ric^\bot \le \mu_{\rm max}\id^\bot,
\]
where $\mu_{\rm min}$ ($\mu_{\rm max}$) is minimal (maximal) eigenvalue of $\Ric^\bot$.
We~then have
\[
 0<\mu_{\rm min}\Ric^\bot \le -\sum\nolimits_i T_{\xi_i}^\sharp \Ric^\bot T_{\xi_i}^\sharp \le \mu_{\rm max}\Ric^\bot.
\]
Thus \eqref{E-RbotT-g2} yields the following differential inequalities:
\begin{equation*}
 2\Ric^\bot(\Ric^\bot +(\mu_{\rm min}-2\Phi)\id^\bot)
 \le\dt\Ric^\bot \le 2\Ric^\bot(\Ric^\bot +(\mu_{\rm max} -2\Phi)\id^\bot).
\end{equation*}
Consider the comparison matrix ODE with $2\Phi>\alpha\in\RR$,
\begin{equation}\label{E1-e-mu-Riem2}
 \dt Z = 2 Z(Z +(\alpha-2\Phi)\id^\bot).
\end{equation}
Let $\mu_i(t)$ be the eigenvalue and $e_i(t)$ the $g_t$-unit eigenvector of the solution
of \eqref{E1-e-mu-Riem2}.
Observe that \eqref{E1-e-mu-Riem2} preserves the directions of $\{e_i\}$ and yields the system
\[
 \dot\mu_i = 2\mu_i(\mu_i+\alpha-2\Phi),\quad 1\le i\le n.
\]
It has global solution
\[
 \mu_i(t)= \frac{\mu_i(0)(2\Phi-\alpha)}{\mu_i(0)+\exp(4\Phi\,t)(2\Phi-\alpha-\mu_i(0))},\quad \mu_i(0)>0.
\]
 Moreover, $\lim\limits_{t\to-\infty}\mu_i(t)=2\Phi-\alpha>0$.
By the above, \eqref{E-RbotT-g2} has a global solution $\Ric^\bot(t)$.
Thus, \eqref{E-GF-Rmix-Phi} has a global solution $g_t\ (t\in\RR)$.

(i) Notice that \eqref{E-lambda} is compatible with \eqref{E-AT-g}: RHS of equations have zero traces, and
\[
 \Delta_\calf\Ric^\bot=(4\Psi_1+\Phi)\Ric^\bot - \Psi_2\id^\bot,
\]
where $\Psi_1:=\frac14\,(\Delta_\calf\,\lambda+\|\nabla^\calf\lambda\|^2)-\Phi$ and
$\Psi_2:=\frac1p\,\|\nabla^\calf\lambda\|^2\,\|T\|^2\ge0$.
Hence each eigenvalue $\mu_j$ of $\Ric^\bot$ satisfies ODE
\[
 \dot\mu_j=4\mu_j(\mu_j+\Psi_1)-\Psi_2,
\]
which has two stationary solutions $\mu_\pm=\frac12\,(-\Psi_1\pm\sqrt{\Psi_1^2+\Psi_2})$ (functions on $M$).
Here, $\mu_+>0$ is attractor for $t\to-\infty$.
Since we assume $\Ric^\bot>0$, by the above, $\lim\limits_{\,t\to-\infty} \Ric^\bot(g_t)=\mu_+ \id^\bot$.

\smallskip

(ii) If $\Ric^\bot=\mu\id^\bot$ then $\Ric^\bot(t)=\mu(t)\id^\bot$ for all $t$, where
$\dot\mu=4\mu(\mu-\Phi)$. Hence,
\[
 \mu(t)= \frac{\mu(0)\Phi}{\mu(0)+\exp(4\Phi t)(\Phi-\mu(0))}
\]
with $\mu(0)>0$ and $\lim\limits_{t\to-\infty}\mu(t)=\Phi$.
Let $\{e_i(t)\}$ be a $g_t$-orthonormal frame of ${\cal D}$.
We then have
 $\dt e_i = (\mu - \Phi) e_i$.
Since $e_i(t)=z(t) e_i(0)$ with $z(0)=1$, then $\dt \log z(t) = \mu(t) - \Phi$.
By the~above, $z(t) = (\mu(t)/\mu(0))^{1/4}$, and
\[
 g_t(e_i(0),e_j(0))= z^{-2}(t)\,g_t(e_i(t),e_j(t)) = \delta_{ij}(\mu(0)/(\mu(t)))^{1/2}.
\]
As $t\to-\infty$, $g_t$ converges to the metric $\hat g$ determined by
$\hat g(e_i(0),e_j(0))=\delta_{ij}\sqrt{\mu(0)/\Phi}$.
\end{proof}

\baselineskip=13.3pt


\begin{thebibliography}{999.}%

\vskip-0mm\bibitem{A} D. Alekseevsky and P. Michor, {Differential geometry of $\mathfrak{g}$-manifolds}, Differential Geom. Appl. {5} (1995), 371--403

\vskip-0mm\bibitem{ah}
B. Andrews and C. Hopper, \textit{The Ricci Flow in Riemannian Geometry},
Springer, 2011

\vskip-0mm\bibitem{bhv}
L. Bedulli, W. He and L. Vezzoni,
Second-Order Geometric Flows on Foliated Manifolds, J. Geom. Anal. 28 (2018), 697--725

\vskip-0mm\bibitem{blaga}
A.\,M. Blaga, An isoparametric function on almost $k$-contact manifolds.
An. St. Univ. Ovidius Contanca 17(1) (2009), 15--22

\vskip-0mm\bibitem{b2010}
D. Blair, \textit{Riemannian Geometry of Contact and Symplectic Manifolds}, Springer, 2010

\vskip-0mm\bibitem{b1970}
D. Blair, Geometry of manifolds with structural group $U(n)\times O(s)$,
J. Diff. Geom. 4 (1970), 155--167

\vskip-0mm\bibitem{ca-toh}
G. Cairns, {A general description of totally geodesic foliations},
Tohoku Math. J. {38} (1986), 37--55

\vskip-0mm\bibitem{diT06}
L. Di Terlizzi, Scalar and ${\varphi}$-sectional curvature of a certain type of metric $f$-structures.
Mediterr. J. Math. 3, no. 3--4 (2006), 533--547

\vskip-0mm\bibitem{fp-m}
L.M. Fern\'{a}ndez and A. Prieto-Martin, On $\eta$-Einstein para-$S$-manifolds,
Bull. Malays. Math. Sci. Soc. 40 (2017), 1623--1637

\vskip-0mm\bibitem{gy}
S.I. Goldberg and K. Yano, On normal globally framed $f$-manifolds, Tohoku Math. J. 22 (1970), 362--370

\vskip-0mm\bibitem{KN} S. Kobayashi and K. Nomizu, \textit{Foundations of Differential Geometry}, v. 1,  John Wiley, 1963

\vskip-0mm\bibitem{Na1966}
H. Nakagawa, $f$-structures induced on submanifolds in spaces, almost Hermitian or
Kaehlerian, Kodai Math. Semin. Rep. 18 (1966), 161--183

\vskip-0mm\bibitem{od}
E. \"{O}z\"{u}sa\u{g}lam and E. Dikici,  Pseudo $f$-manifolds with complemented frames,
Adv. Appl. Clifford Algebr. 26, no. 1 (2016), 305--314

\vskip-0mm\bibitem{olv} P.\,J. Olver, \textit{Applications of Lie Groups to Differential Equations}.
Graduate Texts in Math. vol.~{107}, Springer-Verlag, New York, 1993.

\vskip-0mm\bibitem{r2010}
V. Rovenski, On solutions to equations with partial Ricci curvature, {J. Geom. and Physics}, 86, (2014), 370--382

\vskip-0mm\bibitem{r4}
V. Rovenski, Extrinsic geometric flows on codimension-one foliations,
J. of Geom. Analy\-sis 23(3) (2013), 1530--1558

\vskip-0mm\bibitem{rov-5}
V. Rovenski, {Prescribing the mixed scalar curvature of a foliation}.
Balkan J. of Geometry and Its Applications, Vol. 24, No. 1, (2019), 73--92

\vskip-0mm\bibitem{rov-2}
V. Rovenski, {Problems of Extrinsic Geometry of foliations}. Results of Science and Technology:
Mo\-dern mathematics and its applications. Thematic reviews, Vol. 999 (2019), 1--10

\vskip-0mm\bibitem{r24}
V. Rovenski, The partial Ricci flow for foliations, pp. 125--155,
in ``\textit{Geometry and its Applications}", Springer Proc. in Math. and Statistics {72}, Springer, New-York, 2014

\vskip-0mm\bibitem{RWo-1}
V. Rovenski and R. Wolak, Deforming metrics of foliations, Central European J. Math. 11(6) (2013), 1039--1055

\vskip-0mm\bibitem{tar}
A. Tarrio, On certain classes of metric para-$\phi$-manifolds with parallelizable kernel,
Tensor, N.S. 57 (1996), 258--267

\vskip-0mm\bibitem{Wo-comp} R. Wolak, {Foliations admitting transverse systems of differential equations},
Compositio Math. {67}, no. 1 (1988), 89--101

\vskip-0mm\bibitem{yan}
K. Yano, On a structure $f$ satisfying $f+f^3=0$, Technical Report No. 12, University of Washington, 1961

\end{thebibliography}
\end{document}